\documentclass{article} 
\usepackage{amsmath}
\usepackage{amssymb}
\usepackage{amsthm}
\usepackage{algorithm}
\usepackage{algorithmic}
\usepackage{multirow}
\usepackage{graphicx}
\usepackage{color}
\usepackage{lscape}
\usepackage{url}
\def\@themcountersep{}
\setlength{\evensidemargin}{0 in}
\setlength{\oddsidemargin}{0 in}
\setlength{\textwidth}{6.25in}
\setlength{\textheight}{8.5in}
\setlength{\topmargin}{0 in}
\setlength{\headheight}{0 in}
\setlength{\headsep}{0 in}
\setlength{\itemsep}{-\parsep}

\parskip10pt
\parindent0pt
\newtheorem{theorem}{Theorem}

\newtheorem{corollary}[theorem]{Corollary}

\newtheorem{lemma}[theorem]{Lemma}
\newtheorem{proposition}[theorem]{Proposition}
\newtheorem{remark}[theorem]{Remark}

\def\ba{\begin{array}}
\def\ea{\end{array}}
\def\beq{\begin{equation}}
\def\eeq{\end{equation}}
\def\beann{\begin{eqnarray*} }
\def\eeann{\end{eqnarray*}}
\def\bc{\begin{center}}
\def\ec{\end{center}}
\def\bea{\begin{eqnarray}}
\def\eea{\end{eqnarray}}








\begin{document}

\title{
Evaluating approximations of the semidefinite cone with trace normalized distance 
\thanks{This research was supported by the Japan Society for the Promotion of Science through a Grant-in-Aid for Scientific Research ((B)19H02373) from the Ministry of Education, Culture, Sports, Science and Technology of Japan.}
}

\author{Yuzhu Wang\thanks{
Graduate School of Systems and Information Engineering, University of Tsukuba, Tsukuba, Ibaraki 305-8573, Japan. email: wangyuzhu820@yahoo.co.jp
} 
 and 
Akiko Yoshise\thanks{Corresponding author. Faculty of Engineering, Information and Systems, University of Tsukuba, Tsukuba, Ibaraki 305-8573, Japan. email: yoshise@sk.tsukuba.ac.jp
}     
}

\date{May 2021 \\ Revised July 2022}

\maketitle

\begin{abstract}
We evaluate the dual cone of the set of diagonally dominant matrices (resp., scaled diagonally dominant matrices), namely ${\cal DD}_n^*$ (resp., ${\cal SDD}_n^*$), as an approximation of the semidefinite cone. We prove that the norm normalized distance, proposed by Blekherman et al. (2022), between a set ${\cal S}$ and the semidefinite cone has the same value whenever ${\cal SDD}_n^* \subseteq {\cal S} \subseteq {\cal DD}_n^*$. This implies that the norm normalized distance is not a sufficient measure to evaluate these approximations. As a new measure to compensate for the weakness of that distance, we propose a new distance, called the trace normalized distance. We prove that the trace normalized distance between ${\cal DD}_n^*$ and ${\cal S}^n_+$ has a different value from the one between ${\cal SDD}_n^*$ and ${\cal S}^n_+$ and give the exact values of these distances.

{\bf Key words:}
Semidefinite optimization problem; Diagonally dominant matrix; Scaled diagonally dominant matrix.
\end{abstract}

\section{Introduction}
Semidefinite programming (SDP) can provide powerful convex relaxations for combinatorial and nonconvex optimization problems (\cite{Goemans:1995:IAA:227683.227684}, \cite{Lasserre2001}, \cite{todd2001}, \cite{wolkowicz2012handbook}, etc.). SDP has the following standard form:
\begin{align*}
	\min&\ \langle C,X\rangle\\
	{\rm s.t.}&\ \langle A_j,X\rangle=b_j,j=1,2,\ldots,m,\\
	&\ X\in{\cal S}^n_+,
\end{align*}
where $C\in\mathbb{S}^n$, $A_j\in\mathbb{S}^n$, $b_j\in\mathbb{R}$ ($j=1,2,\ldots,m$), and $\langle A,B\rangle:=\sum_{i,j=1}^nA_{i,j}B_{i,j}$ is the inner product over $\mathbb{S}^n$. The space of symmetric matrices is denoted as ${\mathbb S}^n:=\{X\in\mathbb{R}^{n\times n}\mid X_{i,j}=X_{j,i} \ (1 \leq i < j \leq n) \}$ and the semidefinite cone is defined as ${\cal S}^n_+:=\{X\in\mathbb{S}^n\mid d^TXd\ge0 \ \mbox{for any} \ d\in\mathbb{R}^n \}$. 

SDP is theoretically attractive because it can be solved in polynomial time to any desired precision. However, it is difficult to solve large-scale SDP instances even using state-of-the-art solvers, such as Mosek \cite{mosek}. One technique to overcome this deficiency is to relax the semidefinite constraint and solve the resulting easier problem by using, e.g., linear programming (LP) or second order cone programming (SOCP). A typical relaxation technique is to replace the constraint $X \in \mathcal{S}_+^n$ with a relaxed constraint $X \in \mathcal{S}$, where $\mathcal{S}$ is a subset of $\mathbb{S}^n$ containing $\mathcal{S}_+^n$, i.e., $\mathcal{S}_+^n \subseteq \mathcal{S} \subseteq \mathbb{S}^n$. If $X \in \mathcal{S}$ is described by linear constraints, the resulting problem becomes an LP problem and if $X \in \mathcal{S}$ is described by second-order constraints, the resulting problem becomes an SOCP problem. Such a set ${\cal S}$ is called an outer approximation of ${\cal S}^n_+$. There are two kinds of approximation, i.e., inner approximation and outer approximation. An inner approximation (outer approximation) of ${\cal S}^n_+$ can be obtained by constructing the dual cone of an outer approximation (inner approximation). We will focus on the outer approximations of ${\cal S}^n_+$ and refer to them as approximations of ${\cal S}^n_+$ throughout this paper.

Several sets have been used as approximations of the semidefinite cone, including the $k$-PSD closure, namely ${\cal S}^{n,k}$ (\cite{blekherman2020}) and the dual cone of the set of diagonally dominant matrices (resp., scaled diagonally dominant matrices), namely ${\cal DD}_n^*$ (resp., ${\cal SDD}_n^*$) (\cite{ahmadi2017dsos}). Multiple experiments have shown the efficiency of cutting-plane methods using these approximations (\cite{ahmadi2017dsos}, \cite{bertsimas2020}, \cite{wang2021}). Although the inclusive relationship of these approximations has been given (see, e.g. \cite{ahmadi2017dsos}, \cite{bertsimas2020}), theoretical analyses of how well these sets approximate the semidefinite cone have been limited.

{Fawzi \cite{doi:10.1287/moor.2020.1077} evaluated how polytopes can approximate a compact slice of the semidefinite cone by using a measure called extension complexity.} Bertsimas and Cory-Wright \cite{bertsimas2020} evaluated ${\cal DD}_n^*$ and ${\cal SDD}_n^*$ as approximations of the semidefinite cone by comparing the lower bounds on the minimum eigenvalues of matrices from these two sets. Blekherman et al. \cite{blekherman2020} evaluated ${\cal S}^{n,k}$, as an approximation of the semidefinite cone, by using an evaluation method called the norm normalized distance. The norm normalized distance between a given approximation ${\cal S} \subseteq \mathbb{S}^n$ and ${\cal S}^n_+$ is the {maximum Frobenius distance} from a matrix $X\in{\cal S}$ to ${\cal S}^n_+$, where the Frobenius norm of the matrix $X$ is assumed to be one. They obtained several upper bounds and lower bounds on the norm normalized distance between ${\cal S}^{n,k}$ and ${\cal S}^n_+$.

In this paper, we first show that the norm normalized distance between a set ${\cal S}$ and ${\cal S}^n_+$ has the same value whenever ${\cal SDD}_n^* \subseteq {\cal S} \subseteq {\cal DD}_n^*$. This implies that the norm normalized distance is not a sufficient measure to {differentiate these approximations}. As a new measure to compensate for the weakness of that distance, we introduce a new distance, called the trace normalized distance. We prove that the trace normalized distance between ${\cal DD}_n^*$ and ${\cal S}^n_+$ has a different value from the one between ${\cal SDD}_n^*$ and ${\cal S}^n_+$ and give the exact values of these distances.

The organization of this paper is as follows. Section \ref{sec:1} introduces approximations of the semidefinite cone ${\cal S}^{n,k}$, ${\cal DD}_n^*$ and ${\cal SDD}_n^*$ and their inclusive relationship. In Section \ref{sec:a2}, the norm normalized distance proposed by Blekherman et al. \cite{blekherman2020} is used to evaluate ${\cal DD}_n^*$ and ${\cal SDD}_n^*$. In Section \ref{sec:2}, the trace normalized distance is proposed and used to evaluate ${\cal DD}_n^*$ and ${\cal SDD}_n^*$. We conclude our work in Section \ref{sec:3}.

\section{Approximations of the semidefinite cone}
\label{sec:1}
Consider the following three sets as approximations of the semidefinite cone. Let $k$ and $n$ be positive integers satisfying $2\leq k\leq n$ and
\begin{align}
	{\cal S}^{n,k}:=&\{X\in\mathbb{S}^n\mid\mbox{ All } k\times k\mbox{ principal submatrices of }X \mbox{ are positive semidefinite} \}.\label{def:snk}\\
	{\cal DD}_n^*:=&\{X\in\mathbb{S}^n\mid X_{i,i}+X_{j,j}\pm 2X_{i,j}\ge0\ (1\leq i\leq j\leq n)\}.\label{def:dd}\\
	{\cal SDD}_n^*:=&\{X\in\mathbb{S}^n\mid X_{i,i}X_{j,j}\ge X_{i,j}^2\ (1\leq i<j\leq n),X_{i,i}\ge0\ (i=1,\ldots,n)\}.\label{def:sdd}
\end{align}

${\cal S}^{n,k}$ is called the $k$-$PSD\ closure$, whose properties are discussed in \cite{blekherman2020}. It is obvious from the definition $(\ref{def:snk})$ that ${\cal S}^n_+={\cal S}^{n,n}\subseteq{\cal S}^{n,k_1}\subseteq{\cal S}^{n,k_2}$ when $n\ge k_1\ge k_2\ge2$. 

${\cal DD}_n^*$ (resp. ${\cal SDD}_n^*$) is the dual cone of the set of diagonally dominant matrices (resp. scaled diagonally dominant matrices). These duality relationships imply that ${\cal SDD}_n^*$ is a subset of ${\cal DD}_n^*$. It is worth noting that ${\cal DD}_n^*$ and ${\cal SDD}_n^*$ are used as approximations of the semidefinite cone in cutting-plane methods (\cite{ahmadi2017optimization},\cite{ahmadi2017dsos},\cite{bertsimas2020},\cite{wang2021}) and facial reduction methods \cite{permenter2014partial}.

According to definitions $(\ref{def:snk})$ and $(\ref{def:sdd})$, it is clear that ${\cal S}^{n,2}={\cal SDD}_n^*$. The relations among ${\cal S}^{n,k}$, ${\cal DD}_n^*$ and ${\cal SDD}_n^*$ can be concluded to be 
\begin{align}
	{\cal S}^n_+={\cal S}^{n,n}\subseteq\cdots\subseteq{\cal S}^{n,2}={\cal SDD}_n^*\subseteq{\cal DD}_n^*.\label{setrela}
\end{align}

\section{The norm normalized distance between the semidefinite cone and its approximation}
\label{sec:a2}
Blekherman et al. \cite{blekherman2020} proposed a method of evaluating approximations of the semidefinite cone, which is based on the maximum distance from a matrix in a given approximation $\mathcal{S}_+^n\subseteq {\cal S} \subseteq \mathbb{S}^n$ to $\mathcal{S}_+^n$. A feature of their method is that the distance is evaluated under the constraint that the value of the Frobenius norm is one. The norm normalized distance between a set ${\cal S}$ and ${\cal S}^n_+$ is defined as 
\begin{align}
	\overline{\mbox{dist}}_F({\cal S},{\cal S}^n_+):=&\sup_{X\in{\cal S},\|X\|_F=1}\|X-{\rm P}_{{\cal S}^n_+}(X)\|_F,\label{maxdis}
\end{align}
where ${\rm P}_{{\cal S}^n_+}(X):={\rm argmin}_{Y\in{\cal S}^n_+}\|X-Y\|_F$ is the metric projection of $X$ on ${\cal S}^n_+$. 

The authors of \cite{blekherman2020} showed that $\overline{\mbox{dist}}_F({\cal S}^{n,k},{\cal S}^n_+) \leq \frac{n-k}{n+k-2}$. Through a similar discussion to the one given there, we can prove the following theorem:

\setcounter{theorem}{0}
\begin{theorem}\label{thmy2}
	For $n\ge4$, 
	\begin{align*}
		\overline{\rm dist}_F({\cal DD}_n^*,{\cal S}^n_+)=\overline{\rm dist}_F({\cal SDD}_n^*,{\cal S}^n_+)=\frac{n-2}{n}.
	\end{align*}
\end{theorem}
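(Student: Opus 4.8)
The plan is to establish both equalities by sandwiching. Since $\mathcal{SDD}_n^* \subseteq \mathcal{DD}_n^*$ and the norm normalized distance is monotone under set inclusion (a larger set can only increase the supremum in $(\ref{maxdis})$), we have $\overline{\rm dist}_F(\mathcal{SDD}_n^*,\mathcal{S}^n_+) \le \overline{\rm dist}_F(\mathcal{DD}_n^*,\mathcal{S}^n_+)$. Hence it suffices to prove the upper bound $\overline{\rm dist}_F(\mathcal{DD}_n^*,\mathcal{S}^n_+) \le \frac{n-2}{n}$ and the matching lower bound $\overline{\rm dist}_F(\mathcal{SDD}_n^*,\mathcal{S}^n_+) \ge \frac{n-2}{n}$; the chain of inequalities then collapses and forces all three quantities to be equal.

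For the upper bound, I would follow the strategy the excerpt attributes to Blekherman et al.\ for $\mathcal{S}^{n,k}$. Fix $X \in \mathcal{DD}_n^*$ with $\|X\|_F = 1$, and let $\lambda_1 \le \dots \le \lambda_n$ be its eigenvalues. The distance $\|X - {\rm P}_{\mathcal{S}^n_+}(X)\|_F$ equals $\bigl(\sum_{\lambda_i < 0} \lambda_i^2\bigr)^{1/2}$, the norm of the negative part of $X$. The key is to bound the negative eigenvalues using the defining inequalities of $\mathcal{DD}_n^*$: membership in $\mathcal{DD}_n^*$ means $d^TXd \ge 0$ for every $d \in \{-1,0,1\}^n$ with at most two nonzero entries, equivalently $X$ is "PSD on all $2\times 2$ patterns with $\pm1$ entries." From such inequalities one extracts, for any unit vector $v$ lying in the span of eigenvectors with negative eigenvalues, a lower bound on $v^TXv$ in terms of $\tr(X)$ and $\|X\|_F$; combined with $\sum_i \lambda_i = \tr(X)$ and $\sum_i \lambda_i^2 = 1$, an optimization over the possible eigenvalue configurations yields $\sum_{\lambda_i<0}\lambda_i^2 \le \bigl(\tfrac{n-2}{n}\bigr)^2$. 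The worst case should be the configuration with two equal large positive eigenvalues and $n-2$ equal small negative eigenvalues (or a rescaling thereof), which is exactly what produces the value $\frac{n-2}{n}$ and explains the hypothesis $n \ge 4$.

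For the lower bound, I would exhibit an explicit matrix in $\mathcal{SDD}_n^*$ attaining the bound. The natural candidate is (a normalization of) $X = 2I - \tfrac{2}{n}J$ or, more transparently, the matrix built from the pattern above: take $X$ with eigenvalue $a$ on a two-dimensional space and $-b$ on the complementary $(n-2)$-dimensional space, choosing $a,b>0$ so that $\tr X \ge 0$ forces $2a = (n-2)b$, and normalizing so $\|X\|_F=1$. One must check two things: that this $X$ actually lies in $\mathcal{SDD}_n^*$ (i.e.\ $X_{i,i}X_{j,j} \ge X_{i,j}^2$ and $X_{i,i}\ge 0$ — picking the $(n-2)$-space to be orthogonal to a nicely structured vector, e.g.\ using $\{e_i - e_j\}$ directions, keeps the entries controlled), and that the norm of its negative part equals $\frac{n-2}{n}$. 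A clean choice: let the positive eigenspace be spanned by $e_1+\dots+e_n$ and one more vector, so $X$ is a rank-two perturbation of a multiple of the identity; then all diagonal entries are equal and positive and the $2\times 2$ minors condition is a single scalar inequality.

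The main obstacle I anticipate is the upper-bound argument: translating the combinatorial "$\pm 1$ on pairs" constraints defining $\mathcal{DD}_n^*$ into a usable lower bound on $v^TXv$ for an arbitrary unit vector $v$ in the negative eigenspace is not automatic, since that $v$ need not be aligned with any sign vector. One likely needs an averaging argument over sign patterns (or over coordinate pairs) to relate $\sum_{\lambda_i<0}\lambda_i^2$ to $\tr(X)^2$ and $\|X\|_F^2$, and then a short Lagrange/KKT computation to see that $\frac{n-2}{n}$ is the extremal value; care with the case $\tr(X)<0$ (where one should use that $X\in\mathcal{DD}_n^*$ already bounds how negative the trace can be relative to the Frobenius norm) is also needed. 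Everything else — monotonicity, the eigenvalue formula for the projection distance, and verifying the explicit extremal matrix — is routine.
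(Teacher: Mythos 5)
Your overall architecture (monotonicity from ${\cal SDD}_n^*\subseteq{\cal DD}_n^*$, an upper bound for ${\cal DD}_n^*$, and a matching lower bound witnessed inside ${\cal SDD}_n^*$) is exactly the paper's chain of inequalities, but both halves of your plan have genuine problems. For the lower bound, neither of your candidate witnesses works. The matrix $2I-\frac{2}{n}ee^T$ has eigenvalues $2$ (multiplicity $n-1$) and $0$, so it is positive semidefinite and gives distance $0$. Your second candidate, with eigenvalue $a$ on a two-dimensional space, $-b$ on the complementary $(n-2)$-dimensional space and $2a=(n-2)b$, has trace zero; since every matrix in ${\cal SDD}_n^*$ (indeed in ${\cal DD}_n^*$) has nonnegative diagonal, trace zero forces all diagonal entries to vanish, and then the $2\times2$ minor conditions force $X=0$. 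Even if you only impose ${\rm Tr}(X)\ge0$, i.e.\ $2a\ge(n-2)b$, together with $2a^2+(n-2)b^2=1$, the negative-part norm $b\sqrt{n-2}$ is at most $\sqrt{2/n}$, which is strictly below $\frac{n-2}{n}$ for $n\ge4$. The extremal configuration is the opposite of the one you describe: \emph{one} negative eigenvalue and $n-1$ equal positive eigenvalues, realized by $a(2I-ee^T)$ (the matrix $G(a,a,n)$ of the paper, obtained there via Theorem 3 of Blekherman et al.\ applied to ${\cal S}^{n,2}={\cal SDD}_n^*$); note that $e$ spans the \emph{negative} eigenspace of this matrix, not the positive one as in your "clean choice."

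For the upper bound, your proposal is a sketch whose central step — converting the pairwise $\pm1$ inequalities defining ${\cal DD}_n^*$ into a bound on $v^TXv$ for an arbitrary unit vector $v$ in the negative eigenspace, and then "optimizing over eigenvalue configurations" — is exactly the part you admit you cannot yet do, and it cannot be run as stated because membership in ${\cal DD}_n^*$ is not a spectral condition. The paper avoids eigenvalues altogether: for $X\in{\cal DD}_n^*$ with $\|X\|_F=1$ it defines, for each pair $i<j$, the matrix $X^{(i,j)}$ supported on $\{i,j\}$ with both diagonal entries equal to $\frac{X_{i,i}+X_{j,j}}{2}$ and off-diagonal entry $X_{i,j}$, which is PSD precisely because $X_{i,i}+X_{j,j}\pm2X_{i,j}\ge0$; it then averages these to get $\bar X\in{\cal S}^n_+$ and shows by direct computation, using Cauchy--Schwarz in the form ${\rm Tr}(X)^2\le n\sum_i X_{i,i}^2$, that $\|X-(n-1)\bar X\|_F\le\frac{n-2}{n}$. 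The hypothesis $n\ge4$ enters there to ensure $n-1\ge\frac{2n(n-1)}{3n-4}$ so that a certain coefficient is nonnegative — not via your "two large positive eigenvalues" heuristic. Some concrete construction of this averaging type is what your upper-bound paragraph is missing, and without it (and with the incorrect lower-bound witness) the argument does not close.
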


Note that the second equality (i.e., $\overline{\rm dist}_F({\cal SDD}_n^*,{\cal S}^n_+)=\frac{n-2}{n}$) is a corollary of Blekherman et al.'s lower and upper bounds on ${\cal S}^{n,k}$ (Theorem 1 and 3 of \cite{blekherman2020}). {The proof for $\overline{\rm dist}_F({\cal DD}_n^*,{\cal S}^n_+)$} is provided in Appendix \ref{app1}.

Theorem \ref{thmy2} shows, unfortunately, that the norm normalized distance $(\ref{maxdis})$ gives the same value $\overline{\mbox{dist}}_F({\cal S},{\cal S}^n_+)= \frac{n-2}{n}$ for any approximation ${\cal S} \subseteq \mathbb{S}^n$ whenever it satisfies ${\cal SDD}_n^* \subseteq {\cal S}\subseteq {\cal DD}_n^*$. In the next section, we introduce a new distance, called the trace normalized distance. We show that the new distance between ${\cal SDD}_n^*$ and ${\cal S}^n_+$ has a different value from the one between ${\cal DD}_n^*$ and ${\cal S}^n_+$.
\section{The trace normalized distance between the semidefinite cone and its approximation}
\label{sec:2}

The Frobenius norm normalized distance can be generalized by expanding the normalization method and the distance function. {For example, let ${\cal S}$ be an approximation such that ${\cal S}^n_+\subseteq{\cal S}\subseteq\mathbb{S}^n$, $p\in[1,\infty]$, and $f:\mathbb{S}^n\rightarrow\mathbb{R}$ be a normalization function which requires the set $\{X\in{\cal S}\mid f(X)=1\}$ to be bounded.} One can define the $(f(\cdot),p)$ distance from ${\cal S}$ to ${\cal S}^n_+$ as:	
\begin{align}
	\overline{\mbox{dist}}_{(f(\cdot),p)}({\cal S},{\cal S}^n_+):=&\sup_{X\in{\cal S},f(X)=1}\|X-{\rm P}_{{\cal S}^n_+}(X)\|_p,\label{distance}
\end{align}
where $\|X\|_p:=\sqrt[p]{\sum_{i=1}^n|\lambda_i|^p}$, where $\lambda_1,\ldots,\lambda_n$ are the eigenvalues of $X$, denotes the Schatten $p$-norm of $X\in\mathbb{S}^n$. {Note that $\|X\|_\infty:=\max\{|\lambda_1|,\ldots,|\lambda_n|\}$.} In this notation, the Frobenius norm normalized distance (\ref{maxdis}) can be rewritten as an $(\|\cdot\|_F,2)$ distance (\ref{distance}). 

Recently, Blekherman et al. \cite{blekherman2021} studied a hyperbolic relaxation of ${\cal S}^{n,k}$ and provided an upper bound on the $(f(\cdot),\infty)$ distance from ${\cal S}^{n,k}$ to ${\cal S}^n_+$, where $f$ can be any unitarily invariant matrix norm or the trace function. {It can be observed that the infinity distance of a matrix $X$ to ${\cal S}^n_+$, i.e., $\|X-{\rm P}_{{\cal S}^n_+}(X)\|_\infty$, is the absolute value of the most negative eigenvalue of $X$.} In this paper, instead of evaluating the most negative eigenvalue of the matrices in the approximation ${\cal S}$, we try to figure out how ${\cal S}$ approximates ${\cal S}^n_+$ from a geometric point of view; i.e., we set $p=2$ and stick with the Euclidean norm $\|\cdot\|_F$ of $\mathbb{S}^n$.

One reason why the Frobenius normalized distance (\ref{maxdis}) fails to distinguish ${\cal DD}^*_n$ and ${\cal SDD}^*_n$ might be that the constraint $\|X\|_F=1$ is restrictive and makes the set $\{X\in\mathbb{S}^n\mid\|X\|_F=1\}$ bounded. The required properties for normalization methods (e.g., $f(\cdot)$) here are only to make $\{X\in{\cal DD}^*_n\mid f(X)=1\}$ and $\{X\in{\cal SDD}^*_n\mid f(X)=1\}$ bounded.

There are some choices of the normalization method. For example, one may consider bounding another norm (i.e., $\|X\|=1$), the determinant (i.e., $|X|=1$), or the trace (i.e., ${\rm Tr}(X):=\sum_{i=1}^nX_{i,i}= 1$) of all matrices $X\in\mathbb{S}^n$. In the case of using other norms, we know from the equivalence of norms (e.g., Corollary 5.4.5 \cite{horn1990matrix}) that for any norm $\|\cdot\|$ on $\mathbb{S}^n$ and for every matrix $X\in\mathbb{S}^n$, if $\|X\|=1$, then $\|X\|_F$ is bounded from above.  This shows that the set $\{X\in\mathbb{S}^n\mid\|X\|=1\}$ is also bounded. {Although it is not known whether distances defined with other norms can successfully distinguish ${\cal DD}_n^*$ and ${\cal SDD}_n^*$, studying these distances might be challenging, since the norm (e.g., the Schatten $p$-norm with $p\neq 2$) would be a complicated function in terms of entries of the matrix.} As for the choice of determinant, although $\{X\in\mathbb{S}^n\mid |X|=1\}$ is unbounded, one may notice that $\{X\in{\cal DD}_n^*\mid |X|=1\}$ is also unbounded, which shows that the determinant is unavailable as a normalization method. Similarly, $\{X\in\mathbb{S}^n\mid {\rm Tr}(X)=1\}$ is unbounded, but one can show that $\{X\in{\cal DD}_n^*\mid {\rm Tr}(X)=1\}$ and $\{X\in{\cal SDD}_n^*\mid {\rm Tr}(X)=1\}$ are bounded. Note as well that since Tr$(X)=1$ is a linear constraint, the subset of the polyhedral cone ${\cal DD}_n^*$ with trace equal to $1$, i.e., $\{X\in{\cal DD}_n^*\mid {\rm Tr}(X)=1\}$, is still polyhedral. In fact, we used this fact to derive the trace normalized distance between ${\cal DD}^*_n$ and ${\cal S}^n_+$. From the above discussion, we consider that a distance using the trace is effective for identifying the sets ${\cal DD}^*_n$ and ${\cal SDD}^*_n$. 

\setcounter{theorem}{0}
\begin{remark}
	There actually is a very interesting norm that can be regarded as a normalization method equivalent to Tr$(\cdot)$ on ${\cal DD}_n^*$ and ${\cal SDD}^*_n$. Let ${\cal K}$ be a regular cone (i.e., ${\cal K}$ is convex closed pointed with nonempty interior) where $I\in{\rm int}K^*$ and $\|X\|_I:=\min\{{\rm Tr}(X_1+X_2)\mid X_1-X_2=X,\ X_1,X_2\in{\cal K}\}$ be the norm induced by $I$, which was introduced in \cite{freund2006}. Proposition 1 of \cite{freund2006} implies that $\|X\|_I={\rm Tr}(X)$ if $X\in{\cal K}$. By letting ${\cal K}={\cal DD}^*_n$, it is straightforward to see that $\{X\in{\cal DD}^*_n\mid \|X\|_I=1\}=\{X\in{\cal DD}^*_n\mid {\rm Tr}(X)=1\}$. Since ${\cal SDD}^*_n\subseteq{\cal DD}^*_n$, $\|\cdot\|_I$ and Tr$(\cdot)$ are also equivalent on ${\cal SDD}^*_n$.
\end{remark}

We are now ready to use the Frobenius norm as the distance function and the trace function as the normalization method in (\ref{distance}). To simplify the notation, we will rewrite the $({\rm Tr}(\cdot),2)$ distance (\ref{distance}) into the following trace normalized distance from a set ${\cal S}$ and ${\cal S}^n_+$:
\begin{align}
	\overline{\mbox{dist}}_T({\cal S},{\cal S}^n_+):=\sup_{X\in{\cal S},{\rm Tr}(X)=1}\|X-{\rm P}_{{\cal S}^n_+}(X)\|_F,\label{deftrdis}
\end{align}

As will be shown in the sections below, $\overline{{\rm dist}}_T({\cal SDD}_n^*,{\cal S}^n_+)$ and $\overline{{\rm dist}}_T({\cal DD}_n^*,{\cal S}^n_+)$ are different, i.e., $\overline{{\rm dist}}_T({\cal SDD}_n^*,{\cal S}^n_+)= \frac{n-2}{n}$ (Theorem \ref{thsddt}) and $\overline{{\rm dist}}_T({\cal DD}_n^*,{\cal S}^n_+)=\frac{\sqrt{n}-1}{2}$ (Theorem \ref{thmy7}). 

Table \ref{tab1} compares the proof techniques used by Blekherman et al. \cite{blekherman2020} and those in this paper.{ As can be seen in Table \ref{tab1}, the results for $\overline{\mbox{dist}}_{F}({\cal SDD}^*_n,{\cal S}^n_+)$, $\overline{\mbox{dist}}_{F}({\cal DD}^*_n,{\cal S}^n_+)$ and $\overline{\mbox{dist}}_{T}({\cal SDD}^*_n,{\cal S}^n_+)$ in this paper are proved by using techniques presented by Blekherman et al. \cite{blekherman2020}. Specifically, the "averaging technique" in Theorem 1 \cite{blekherman2020} (resp., the "specific matrix construction technique" in Theorem 3 \cite{blekherman2020}) is modified and used to prove Theorem 1 and Lemma 2 (resp., Lemma 1) in this paper. As for Theorem 3 in this paper, we obtained the exact result of $\overline{\mbox{dist}}_{T}({\cal DD}^*_n,{\cal S}^n_+)$ by analyzing the structures of extreme points of $\{X\in{\cal DD}_n^*\mid {\rm Tr}(X)=1\}$ and by adopting the Bauer maximum principle. This new approach benefits from the polyhedral structure of $\{X\in{\cal DD}_n^*\mid {\rm Tr}(X)=1\}$.
	
	Note that in the proof of Theorem 2 \cite{blekherman2020}, Blekherman et al. used the Cauchy interlacing theorem to calculate the number of negative eigenvalues of an arbitrary matrix $X\in{\cal S}^{n,k}$, which is at most $n-k$. They showed that $\|X-P_{{\cal S}^n_+}(X)\|\leq \sqrt{(n-k)\lambda_1^2}$, where $\lambda_1$ is the most negative eigenvalue of $X$. Finally, to get an upper bound of $\overline{\mbox{dist}}_{F}({\cal S}^{n,k},{\cal S}^n_+)$, they only need to bound $\lambda_1$. However, in the setting of this paper (e.g., to analyze ${\cal SDD}^*_n$ and ${\cal DD}^*_n$), this approach may not be efficient. For example, because ${\cal SDD}^*_n={\cal S}^{n,2}$, the matrices in ${\cal SDD}^*_n$ could have at most $n-2$ negative eigenvalues. If we bound the most negative eigenvalue of a matrix in ${\cal SDD}^*_n$ and use the Cauchy interlace theorem, we would inflate the bound by $\sqrt{n-2}$, making the upper bound of $\overline{\mbox{dist}}_{F}({\cal SDD}^*_n,{\cal S}^n_+)$ unnecessarily large.
}

\begin{table}[h]
	\centering
	
	\begin{tabular}{cc|l}
		\hline
		\multicolumn{2}{c|}{\begin{tabular}[c]{@{}c@{}}Proof techniques\\ of each theorem\end{tabular}}                   & \multicolumn{1}{c}{Blekherman et al. 2022 \cite{blekherman2020}}             \\ \hline
		\multicolumn{2}{c|}{Object}                                                                                           & \multicolumn{1}{c}{${\cal S}^{n,k}$}                   \\ \hline
		\multicolumn{1}{c|}{\multirow{4}{*}{$\overline{\mbox{dist}}_{F}(\cdot,{\cal S}^n_+)$}} & \multirow{2}{*}{Upper bound} & (Theorem 1 {\cite{blekherman2020}}) Averaging technique                        \\ \cline{3-3} 
		\multicolumn{1}{c|}{}                                                                  &                              & (Theorem 2 {\cite{blekherman2020}}) Bound the most negative eigenvalue         \\ \cline{2-3} 
		\multicolumn{1}{c|}{}                                                                  & \multirow{2}{*}{Lower bound} & (Theorem 3 {\cite{blekherman2020}}) Construct a matrix far from ${\cal S}^n_+$ \\ \cline{3-3} 
		\multicolumn{1}{c|}{}                                                                  &                              & (Theorem 4 {\cite{blekherman2020}}) Restricted Isometry Property               \\ \hline
	\end{tabular}
	
	\vspace{3mm}
	
	\begin{tabular}{cc|ll}
		\hline
		\multicolumn{2}{c|}{\begin{tabular}[c]{@{}c@{}}Proof techniques\\ of each theorem\end{tabular}}                   & \multicolumn{2}{c}{This paper}                                                                                                                                                                                                                                                                                                                                                                            \\ \hline
		\multicolumn{2}{c|}{Object}                                                                                           & \multicolumn{1}{c|}{${\cal SDD}^*_n$}                                                                                                                                                                          & \multicolumn{1}{c}{${\cal DD}^*_n$}                                                                                                                                                      \\ \hline
		\multicolumn{1}{c|}{\multirow{4}{*}{$\overline{\mbox{dist}}_{F}(\cdot,{\cal S}^n_+)$}} & \multirow{2}{*}{Upper bound} & \multicolumn{1}{l|}{\multirow{2}{*}{\begin{tabular}[c]{@{}l@{}}(Theorem 1) \\ $\overline{\mbox{dist}}_{F}({\cal SDD}^*_n,{\cal S}^n_+)\leq\overline{\mbox{dist}}_F({\cal DD}^*_n,{\cal S}^n_+)$\end{tabular}}} & \multirow{2}{*}{\begin{tabular}[c]{@{}l@{}}(Theorem 1) \\ Averaging technique\end{tabular}}                                                                                              \\
		\multicolumn{1}{c|}{}                                                                  &                              & \multicolumn{1}{l|}{}                                                                                                                                                                                          &                                                                                                                                                                                          \\ \cline{2-4} 
		\multicolumn{1}{c|}{}                                                                  & \multirow{2}{*}{Lower bound} & \multicolumn{1}{l|}{\multirow{2}{*}{\begin{tabular}[c]{@{}l@{}}(Theorem 1)\\ Corollary of Theorem 3 \cite{blekherman2020}\end{tabular}}}                                                                            & \multirow{2}{*}{\begin{tabular}[c]{@{}l@{}}(Theorem 1) \\ $\overline{\mbox{dist}}_{F}({\cal DD}^*_n,{\cal S}^n_+)\ge\overline{\mbox{dist}}_F({\cal SDD}^*_n,{\cal S}^n_+)$\end{tabular}} \\
		\multicolumn{1}{c|}{}                                                                  &                              & \multicolumn{1}{l|}{}                                                                                                                                                                                          &                                                                                                                                                                                          \\ \hline
		\multicolumn{1}{c|}{\multirow{2}{*}{$\overline{\mbox{dist}}_{T}(\cdot,{\cal S}^n_+)$}} & Upper bound                  & \multicolumn{1}{l|}{\begin{tabular}[c]{@{}l@{}}(Lemma 2) \\ Averaging technique\end{tabular}}                                                                                                                  & \multirow{2}{*}{\begin{tabular}[c]{@{}l@{}}(Theorem 3)\\ Analyze extreme points of {$\{X\in{\cal DD}_n^*\mid {\rm Tr}(X)=1\}$} \\ and use the Bauer maximum principle\end{tabular}}          \\ \cline{2-3}
		\multicolumn{1}{c|}{}                                                                  & Lower bound                  & \multicolumn{1}{l|}{\begin{tabular}[c]{@{}l@{}}(Lemma 1) \\ Construct a matrix far from ${\cal S}^n_+$\end{tabular}}                                                                                           &                                                                                                                                                                                          \\ \hline
	\end{tabular}
	\caption{Comparison of proof techniques in Blekherman et al. \cite{blekherman2020} and those in this paper.}
	\label{tab1}
	
\end{table}

\subsection{The trace normalized distance between ${\cal SDD}_n^*$ and ${\cal S}^n_+$}

\begin{theorem}\label{thsddt}
	For all $n\ge 2$, 
	\begin{align*}
		\overline{{\rm dist}}_T({\cal SDD}_n^*,{\cal S}^n_+)= \frac{n-2}{n}.
	\end{align*}
\end{theorem}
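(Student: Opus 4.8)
The plan is to prove the claimed equality by matching lower and upper bounds. For the lower bound $\overline{{\rm dist}}_T({\cal SDD}_n^*,{\cal S}^n_+)\ge\frac{n-2}{n}$ I would exhibit one explicit matrix, and for the upper bound I would combine a single structural inequality special to ${\cal SDD}_n^*$ with a short scalar optimization. Throughout I will use the standard description of the metric projection onto ${\cal S}^n_+$: writing $X=X_+-X_-$ with $X_+,X_-\succeq0$ obtained from the spectral decomposition of $X$ (so $X_+X_-=0$ and $X_+={\rm P}_{{\cal S}^n_+}(X)$), one has $\|X-{\rm P}_{{\cal S}^n_+}(X)\|_F=\|X_-\|_F$, $\|X\|_F^2=\|X_+\|_F^2+\|X_-\|_F^2$, and ${\rm Tr}(X)={\rm Tr}(X_+)-{\rm Tr}(X_-)$.

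For the lower bound I would take $X:=2I-ee^T$, where $e=(1,\ldots,1)^T\in\mathbb{R}^n$. Since $X_{i,i}=1$ and $X_{i,j}=-1$ for $i\neq j$, we get $X_{i,i}X_{j,j}=1=X_{i,j}^2$, so $X\in{\cal SDD}_n^*$; its eigenvalues are $2-n$ (eigenvector $e$) and $2$ (multiplicity $n-1$), giving ${\rm Tr}(X)=n$ and, since $2-n\le0$ for $n\ge2$, $\|X-{\rm P}_{{\cal S}^n_+}(X)\|_F=n-2$. Rescaling to $\frac1n X\in{\cal SDD}_n^*$, which has trace $1$, and using positive homogeneity of the projection gives a point feasible for the supremum in $(\ref{deftrdis})$ with objective value $\frac{n-2}{n}$.

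For the upper bound, the only fact I need about ${\cal SDD}_n^*$ is that $\|X\|_F\le{\rm Tr}(X)$ for every $X\in{\cal SDD}_n^*$; this follows termwise from $X_{i,j}^2\le X_{i,i}X_{j,j}$, since $\|X\|_F^2=\sum_iX_{i,i}^2+2\sum_{i<j}X_{i,j}^2\le(\sum_iX_{i,i})^2={\rm Tr}(X)^2$ and ${\rm Tr}(X)\ge0$. I would then fix $X\in{\cal SDD}_n^*$ with ${\rm Tr}(X)=1$, assume $X$ has a negative eigenvalue (otherwise the distance is $0$), and set $t:={\rm Tr}(X)>0$, $\beta:={\rm Tr}(X_-)$, $\gamma:=\|X_-\|_F$. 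Then $\gamma\le\beta$ because $X_-\succeq0$, and since $X$ has at least one negative eigenvalue it has at most $n-1$ positive ones, so $\|X_+\|_F^2\ge{\rm Tr}(X_+)^2/(n-1)=(t+\beta)^2/(n-1)$. Substituting into $\|X_+\|_F^2+\|X_-\|_F^2=\|X\|_F^2\le t^2$, and then using $\gamma\le\beta$ to replace $\beta$ by $\gamma$ in the first term, gives $(t+\gamma)^2/(n-1)+\gamma^2\le t^2$, i.e. $n\gamma^2+2t\gamma-(n-2)t^2\le0$; solving this quadratic in $\gamma$ and using $1+n(n-2)=(n-1)^2$ yields $\gamma\le\frac{n-2}{n}t$, that is $\|X-{\rm P}_{{\cal S}^n_+}(X)\|_F\le\frac{n-2}{n}{\rm Tr}(X)=\frac{n-2}{n}$. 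Taking the supremum over all such $X$ completes the proof.

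I expect the main obstacle to be getting the sharp constant in the upper bound. The inequality $\|X\|_F\le{\rm Tr}(X)$ alone only gives $\overline{{\rm dist}}_T({\cal SDD}_n^*,{\cal S}^n_+)\le\overline{{\rm dist}}_F({\cal SDD}_n^*,{\cal S}^n_+)$, which together with Theorem \ref{thmy2} would settle $n\ge4$ but leave $n=2,3$ open; to obtain $\frac{n-2}{n}$ uniformly for all $n\ge2$ in one stroke one must additionally exploit that a matrix at positive distance from ${\cal S}^n_+$ has at most $n-1$ positive eigenvalues — it is precisely the denominator $n-1$ (rather than $n$) in the bound on $\|X_+\|_F^2$ that makes the final quadratic collapse to exactly $\frac{n-2}{n}$. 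Checking that $2I-ee^T$ attains equality in every inequality along the chain (so that the lower and upper bounds truly coincide) is the other point requiring care; the remaining manipulations are routine. One may also note that $\|X\|_F\le{\rm Tr}(X)$ generally fails on ${\cal DD}_n^*$, which is consistent with the trace normalized distances of the two cones turning out to be different.
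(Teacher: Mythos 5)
Your proof is correct. The lower bound is in substance the same as the paper's Lemma \ref{thsddlow}: the paper optimizes over the two-parameter family $G(a,b,n)=(a+b)I-aee^{T}$ subject to ${\rm Tr}(G)=1$ and lands at $\bar a=\bar b=\tfrac1n$, which is exactly the matrix $\tfrac1n\bigl(2I-ee^{T}\bigr)$ you write down directly. The upper bound, however, follows a genuinely different route. The paper's Lemma \ref{thmy3} builds, for each $X\in{\cal SDD}_n^*$ with ${\rm Tr}(X)=1$, an explicit comparison point $\tilde\alpha\tilde X\in{\cal S}^n_+$ by averaging the $2\times2$ blocks $X^{(i,j)}$ and taking $\tilde\alpha=n-1$, then bounds $\|X-\tilde\alpha\tilde X\|_F$ using $X_{i,j}^2\le X_{i,i}X_{j,j}$ inside the norm computation. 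You instead extract the single scalar consequence $\|X\|_F\le{\rm Tr}(X)$ of the ${\cal SDD}_n^*$ constraints and work with the spectral split $X=X_+-X_-$: the rank bound ${\rm rank}(X_+)\le n-1$ (valid whenever $X_-\neq0$) gives $\|X_+\|_F^2\ge{\rm Tr}(X_+)^2/(n-1)$, $\|X_-\|_F\le{\rm Tr}(X_-)$ lets you pass from ${\rm Tr}(X_-)$ to $\|X_-\|_F$ in the right direction, and the resulting quadratic $n\gamma^2+2t\gamma-(n-2)t^2\le0$ collapses via $1+n(n-2)=(n-1)^2$ to the sharp value $\gamma\le\frac{n-2}{n}t$; all steps check out, including the degenerate case $n=2$, where your chain simply forces $\gamma=0$ (consistent with ${\cal SDD}_2^*={\cal S}^2_+$). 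What your route buys: no construction of a feasible PSD point is needed, the case $n=2$ is handled uniformly (the paper treats it separately and runs its two lemmas only for $n\ge3$), and your argument actually proves the stronger statement that $\overline{{\rm dist}}_T({\cal S},{\cal S}^n_+)\le\frac{n-2}{n}$ for the larger second-order-cone set ${\cal S}=\{X\in\mathbb{S}^n\mid\|X\|_F\le{\rm Tr}(X)\}\supseteq{\cal SDD}_n^*$, with your witness $2I-ee^{T}$ showing tightness even there. What the paper's route buys is uniformity of technique: its averaging construction mirrors the Blekherman et al.\ argument reused in Theorem \ref{thmy2} (Appendix \ref{app1}), so the norm- and trace-normalized bounds are obtained by the same template.
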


To prove this theorem, we need Lemmas \ref{thsddlow} and \ref{thmy3}. Lemma \ref{thsddlow} gives a lower bound on $\overline{{\rm dist}}_T({\cal SDD}_n^*,{\cal S}^n_+)$ and Lemma \ref{thmy3} gives an upper bound on $\overline{{\rm dist}}_T({\cal SDD}_n^*,{\cal S}^n_+)$. Here, we assume that $n\ge3$. If $n=2${, from the fact that ${\cal SDD}_2={\cal S}^2_+$,} we can easily see that $\overline{{\rm dist}}_T({\cal SDD}_2^*,{\cal S}^2_+)=\frac{n-2}{n}=0$.

Lemmas 1 and 2 are based on the proofs of Theorems 3 and 1 in \cite{blekherman2020}.

\setcounter{theorem}{0}
\begin{lemma}\label{thsddlow}
	For all $n\ge 3$, 
	\begin{align*}
		\overline{{\rm dist}}_T({\cal SDD}_n^*,{\cal S}^n_+)\ge \frac{n-2}{n}.
	\end{align*}
\end{lemma}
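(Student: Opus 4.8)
The plan is to prove the lower bound by exhibiting a single matrix $X\in{\cal SDD}_n^*$ with ${\rm Tr}(X)=1$ whose distance to ${\cal S}^n_+$ equals $\frac{n-2}{n}$; the supremum in $(\ref{deftrdis})$ then dominates this value. The matrix I would take is the (trace-normalized) analogue of the extremal matrix used in the proof of Theorem~3 of \cite{blekherman2020} for $k=2$, namely
\[
X=\frac1n\bigl(2I_n-J_n\bigr),\qquad\text{i.e. }X_{i,i}=\tfrac1n,\ \ X_{i,j}=-\tfrac1n\ (i\ne j),
\]
where $I_n$ is the identity matrix and $J_n=ee^T$ is the $n\times n$ all-ones matrix ($e=(1,\ldots,1)^T$).

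First I would verify the three required properties. Membership $X\in{\cal SDD}_n^*$: each diagonal entry is $\frac1n>0$, and for $i<j$ we have $X_{i,i}X_{j,j}=\frac1{n^2}=X_{i,j}^2$, so the inequalities defining $(\ref{def:sdd})$ hold (with equality, so $X$ sits on the boundary); and ${\rm Tr}(X)=1$ is immediate. Then I would record the spectral decomposition: since $J_ne=ne$ and $J_nv=0$ for $v\perp e$, the matrix $X$ has the single negative eigenvalue $\frac{2-n}{n}$ with unit eigenvector $e/\sqrt n$ and the eigenvalue $\frac2n$ of multiplicity $n-1$ on $e^{\perp}$ (here $n\ge3$ makes the first eigenvalue negative, so that $X\notin{\cal S}^n_+$). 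Invoking the classical fact that the Frobenius-norm metric projection onto ${\cal S}^n_+$ truncates the negative eigenvalues to $0$, we get ${\rm P}_{{\cal S}^n_+}(X)=\frac2n\bigl(I_n-\frac1nJ_n\bigr)$, hence $X-{\rm P}_{{\cal S}^n_+}(X)=\frac{2-n}{n^2}J_n$, and since $\|J_n\|_F=n$ this yields $\|X-{\rm P}_{{\cal S}^n_+}(X)\|_F=\frac{n-2}{n}$, which is exactly the claimed bound.

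I do not expect a genuine obstacle: once the right matrix is written down, the argument is a short verification, the only external input being the eigenvalue-truncation description of the projection onto ${\cal S}^n_+$. I would also note in passing that this same $X$ has $\|X\|_F=1$ (because $\|2I_n-J_n\|_F^2=n+n(n-1)=n^2$), which is the reason the norm normalized and trace normalized lower bounds for ${\cal SDD}_n^*$ coincide, and which ties this lemma directly to Theorem~\ref{thmy2}.
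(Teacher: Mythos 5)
Your proposal is correct and follows essentially the same route as the paper: your witness $X=\frac1n(2I-ee^T)$ is exactly the matrix $G(\bar a,\bar b,n)$ with $\bar a=\bar b=\frac1n$ at which the paper's optimization over the family $G(a,b,n)=(a+b)I-aee^T$ attains its maximum, and your distance computation via the single negative eigenvalue $\frac{2-n}{n}$ (truncated by the projection onto ${\cal S}^n_+$) matches the paper's evaluation $\|G-{\rm P}_{{\cal S}^n_+}(G)\|_F=(n-1)\bar a-\bar b=\frac{n-2}{n}$. The only difference is cosmetic: you write down the extremal matrix directly instead of deriving it as the optimum of the parametric problem.
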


\begin{proof}
	Let $I\in\mathbb{S}^n$ be the identity matrix and $e:=(1,\ldots,1)^T\in\mathbb{R}^n$. Given scalars $a,b\ge0$, we define a matrix, 
	\begin{align}
		G(a,b,n):=(a+b)I-aee^T.\label{defg}
	\end{align}
	
	If $G(a,b,n)\in{\cal SDD}_n^*\setminus{\cal S}^n_+$ and Tr$(G(a,b,n))=1$, then by definition $(\ref{deftrdis})$, $\|G(a,b,n)-{\rm P}_{{\cal S}^n_+}(G(a,b,n))\|_F$ gives a lower bound on $\overline{{\rm dist}}_T({\cal SDD}_n^*,{\cal S}^n_+)$. To find a tighter lower bound, we consider the following problem (\ref{p1}) on the parameters $a$ and $b$:
	\begin{subequations}\label{p1}
		\begin{eqnarray}
			&\max_{a,b\ge0}&\|G(a,b,n)-{\rm P}_{{\cal S}^n_+}(G(a,b,n))\|_F\label{eqobj}\\
			&{\rm s.t.}&G(a,b,n)\notin{\cal S}^n_+,\label{eqcons}\\
			&&G(a,b,n)\in{\cal SDD}_n^*,\label{eqcont}\\
			&&{\rm Tr}(G(a,b,n))=1.\label{eqconst}
		\end{eqnarray}
	\end{subequations}
	
	Problem $(\ref{p1})$ can be equivalently written as:
	\begin{subequations}\label{p2}
		\begin{eqnarray}
			&\max_{a,b\ge0}&(n-1)a-b\label{eqobj2}\\
			&{\rm s.t.}&b<(n-1)a,\label{eqcons2}\\
			&&b\ge a,\label{eqcont2}\\
			&&nb=1.\label{eqconst2}
		\end{eqnarray}
	\end{subequations}
	
	To prove the equivalence between (\ref{p1}) and (\ref{p2}), we first show that the constraints (\ref{eqcons}) and (\ref{eqcons2}) are equivalent. Proposition 4 in \cite{blekherman2020} ensures that the eigenvalues of $G(a,b,n)$ are $a+b$ with multiplicity $n-1$ and $b-(n-1)a$ with multiplicity $1$. Note that $a,b\ge0$; hence,
	\begin{align}
		G(a,b,n)\notin{\cal S}^n_+\mbox{ if and only if }b<(n-1)a.\label{eqgs}
	\end{align} 
	
	Next, we verify that (\ref{eqcont}) and (\ref{eqcont2}) are equivalent. It follows from definition $(\ref{def:sdd})$ that $G(a,b,n)\in{\cal SDD}^*_n$ if and only if all the $2\times 2$ submatrices of $G(a,b,n)$ are positive semidefinite. It is obvious from $(\ref{defg})$ that any $2\times2$ submatrix of $G(a,b,n)$ is $G(a,b,2)$. $(\ref{eqgs})$ ensures that $G(a,b,2)\in{\cal S}^2_+$ if and only if $b\ge a$ and we can conclude that
	\begin{align}
		G(a,b,n)\in{\cal SDD}^*_n\mbox{ if and only if }b\ge a.\label{eqsddif}
	\end{align}
	
	The equivalence between (\ref{eqconst}) and (\ref{eqconst2}) comes from the fact that the definition $(\ref{defg})$ implies that
	\begin{align}
		{\rm Tr}(G(a,b,n))=nb.\label{eqsddtr}
	\end{align}
	
	Finally, we show that the objective functions (\ref{eqobj}) and (\ref{eqobj2}) are equivalent. Since (\ref{eqcons}) implies that $G(a,b,n)\notin {\cal S}^n_+$, it is apparent from (\ref{eqgs}) that $b-(n-1)a<0$. Then, $b-(n-1)a$ is the only negative eigenvalue of $G(a,b,n)$, and hence,
	\begin{align}
		\|G(a,b,n)-{\rm P}_{{\cal S}^n_+}(G(a,b,n))\|_F=(n-1)a-b.\label{eqsddobj}
	\end{align}
	
	One can see that problems (\ref{p1}) and (\ref{p2}) are equivalent from (\ref{eqgs}), (\ref{eqsddif}), (\ref{eqsddtr}) and (\ref{eqsddobj}). {Then for any feasible solution $(a,b)$ of (\ref{p2}), $(n-1)a-b$ gives a lower bound on $\overline{{\rm dist}}_T({\cal SDD}_n^*,{\cal S}^n_+)$.} The optimal solution of problem (\ref{p2}) is $\bar{a}=\bar{b}=\frac{1}{n}$; hence, we have
	\begin{align*}
		\overline{\mbox{\rm dist}}_T({\cal SDD}_n^*,{\cal S}^n_+)\ge\|G(\bar{a},\bar{b},n)-{\rm P}_{{\cal S}^n_+}(G(\bar{a},\bar{b},n))\|_F=\frac{n-2}{n}.
	\end{align*}

\end{proof}

\begin{lemma}\label{thmy3}
	For all $n\ge 3$, 
	\begin{align*}
		\overline{{\rm dist}}_T({\cal SDD}_n^*,{\cal S}^n_+)\leq \frac{n-2}{n}.
	\end{align*}
\end{lemma}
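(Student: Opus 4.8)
The plan is to bound $\|X - {\rm P}_{{\cal S}^n_+}(X)\|_F$ uniformly over all $X \in {\cal SDD}_n^*$ with ${\rm Tr}(X)=1$. First I would recall that for any symmetric $X$, the metric projection onto ${\cal S}^n_+$ is obtained by spectral truncation: if $X = \sum_i \lambda_i v_i v_i^T$, then ${\rm P}_{{\cal S}^n_+}(X) = \sum_{\lambda_i > 0} \lambda_i v_i v_i^T$, so that $\|X - {\rm P}_{{\cal S}^n_+}(X)\|_F^2 = \sum_{\lambda_i < 0} \lambda_i^2$, the sum of squares of the negative eigenvalues. Thus it suffices to show that $\sum_{\lambda_i < 0} \lambda_i^2 \le \left(\frac{n-2}{n}\right)^2$ whenever $X \in {\cal SDD}_n^*$ and $\sum_i \lambda_i = {\rm Tr}(X) = 1$.

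The key structural input is the ${\cal SDD}_n^*$ condition $X_{i,i} X_{j,j} \ge X_{i,j}^2$ with $X_{i,i} \ge 0$, which I expect to exploit exactly as in the proof of Theorem 1 of \cite{blekherman2020}: this condition controls the off-diagonal mass relative to the diagonal. Concretely, writing $D = {\rm diag}(X)$, one has $\sum_i D_{i,i} = 1$, and the ${\cal SDD}$ inequalities give $|X_{i,j}| \le \sqrt{D_{i,i} D_{j,j}}$. I would then estimate the most negative eigenvalue (and, more carefully, the full negative part) of $X$ via a quadratic-form bound: for any unit vector $d$ lying in the negative eigenspace, $d^T X d \ge d^T D d - \sum_{i \ne j} |d_i||d_j|\sqrt{D_{i,i}D_{j,j}} \ge -\big(\sum_i \sqrt{D_{i,i}}\,|d_i|\big)^2 + 2 d^T D d$ or a similar manipulation, and then bound $\sum_i \sqrt{D_{i,i}}$ by Cauchy–Schwarz using $\sum_i D_{i,i} = 1$. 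Combined with the fact that the positive eigenvalues sum to at least the negative part plus one (from the trace constraint), this should pin the negative spectrum down. The extremal configuration is the matrix $G(\bar a,\bar b,n)$ from Lemma \ref{thsddlow} with $\bar a = \bar b = 1/n$, whose single negative eigenvalue equals $-\frac{n-2}{n}$, so the bound I derive must be tight there.

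The main obstacle I anticipate is handling the case of \emph{several} negative eigenvalues: bounding the single most negative eigenvalue is comparatively easy, but the distance involves $\sum_{\lambda_i<0}\lambda_i^2$, and one must rule out the possibility that many moderately negative eigenvalues accumulate to exceed $\left(\frac{n-2}{n}\right)^2$. I would address this by a two-part argument — first show $\sum_{\lambda_i < 0} |\lambda_i| \le \frac{n-2}{n} \cdot \text{(something)}$ using the trace and the ${\cal SDD}$-induced bound on $-\sum_{\lambda_i<0}\lambda_i = \frac{1}{2}(\sum_i|\lambda_i| - {\rm Tr}(X))$ together with a Schatten-norm estimate $\sum_i |\lambda_i| = \|X\|_* \le \sum_i \sqrt{D_{i,i}} \cdot (\text{factor})$ coming from the SDD structure, and then use $\sum_{\lambda_i<0}\lambda_i^2 \le \big(\sum_{\lambda_i<0}|\lambda_i|\big)^2$ — or, if that is too lossy, interpolate with the bound on the largest $|\lambda_i|$. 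If the direct approach is delicate, a cleaner route is to invoke the relevant estimate from the proof of Theorem 1 in \cite{blekherman2020} essentially verbatim, since ${\cal SDD}_n^* = {\cal S}^{n,2}$ and their argument for $\overline{\rm dist}_F({\cal S}^{n,2},{\cal S}^n_+)$ already produces the constant $\frac{n-2}{n}$; the only adaptation needed is to replace the normalization $\|X\|_F = 1$ by ${\rm Tr}(X) = 1$ and track how the bound depends on $\|X\|_F$ versus ${\rm Tr}(X)$ — and since for $X \in {\cal SDD}_n^*$ one has $\|X\|_F \ge \|D\|_F$ but the negative part is what matters, the trace normalization should yield the same numerical constant.
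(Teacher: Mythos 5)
There is a genuine gap. Your primary (spectral) route does not close: the quadratic-form estimate you sketch controls only $\lambda_{\min}$, and even there, after Cauchy--Schwarz with $\sum_i D_{i,i}={\rm Tr}(X)=1$ it gives $d^TXd\ge 2\,d^TDd-\bigl(\sum_i|d_i|\sqrt{D_{i,i}}\bigr)^2\ge-1$, not $-\frac{n-2}{n}$; to improve it you would need something like $d^TDd\ge\frac1n$ on the negative eigenspace, which is not available. The quantity that actually matters, $\sqrt{\sum_{\lambda_i<0}\lambda_i^2}$, is the issue you yourself flag, and the proposed repair (a nuclear-norm bound $\|X\|_*\le(\mbox{factor})\sum_i\sqrt{D_{i,i}}$ with the factor unspecified, then squaring the negative part) is never carried out; note you would need $\|X\|_*\le\frac{3n-4}{n}$ for every trace-one $X\in{\cal SDD}_n^*$, and no argument for that is given. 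Your fallback is the right strategy --- the paper indeed states that this lemma is based on the proof of Theorem 1 of Blekherman et al.\ --- but the one step you wave through, that switching from $\|X\|_F=1$ to ${\rm Tr}(X)=1$ ``should yield the same numerical constant,'' is precisely the content of the lemma and cannot be assumed: this very paper shows the normalization changes the answer for ${\cal DD}_n^*$ (from $\frac{n-2}{n}$ to $\frac{\sqrt n-1}{2}$), so the heuristic ``$\|X\|_F\ge\|D\|_F$ and only the negative part matters'' is not a substitute for the computation.

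For comparison, the paper's proof is an explicit surrogate-projection argument in which the trace enters at a specific place. For $X\in{\cal SDD}_n^*$ with ${\rm Tr}(X)=1$, define $X^{(i,j)}\in{\cal S}^n_+$ supported on rows and columns $i,j$ with entries $X_{i,i},X_{j,j},X_{i,j}$, and let $\tilde X$ be their average over all $\binom n2$ pairs, so $\tilde X_{i,i}=\frac2nX_{i,i}$ and $\tilde X_{i,j}=\frac{2}{n(n-1)}X_{i,j}$. One bounds $\|X-\alpha\tilde X\|_F$ directly, and the ${\cal SDD}_n^*$ condition is used exactly once, as $\sum_{i\ne j}X_{i,j}^2\le\sum_{i\ne j}X_{i,i}X_{j,j}={\rm Tr}(X)^2-\sum_iX_{i,i}^2$ --- this is where the trace normalization does its work. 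Choosing $\alpha=n-1$ makes the coefficient of $\sum_iX_{i,i}^2$ vanish, leaving $\|X-(n-1)\tilde X\|_F\le\bigl|1-\tfrac{2(n-1)}{n(n-1)}\bigr|\,{\rm Tr}(X)=\frac{n-2}{n}$, and since $(n-1)\tilde X\in{\cal S}^n_+$ this bounds $\|X-{\rm P}_{{\cal S}^n_+}(X)\|_F$. If you carry out this computation (or an equivalent tracking of the Blekherman argument under the trace constraint), your plan becomes a proof; as written, the quantitative step that produces $\frac{n-2}{n}$ under ${\rm Tr}(X)=1$ is missing.
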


\begin{proof}
	If a scalar $U$ satisfies $\|X-{\rm P}_{{\cal S}^n_+}(X)\|_F\leq U$ for every $X\in{\cal SDD}_n^*$ with ${\rm Tr}(X)=1$, then $U$ is an upper bound on $\overline{{\rm dist}}_T({\cal SDD}_n^*,{\cal S}^n_+)$. Below, we find such a scalar $U$.
	
	Let $X$ be a matrix in ${\cal SDD}_n^*$ satisfying ${\rm Tr}(X)=1$. We construct a matrix $\tilde{X}\in{\cal S}^n_+$ and a scalar $\tilde{\alpha}\ge0$ {in a way such that $\|X-\tilde{\alpha}\tilde{X}\|_F\leq U$, which then shows that $\|X-P_{{\cal S}^n_+}(X)\|_F\leq U$.} 
	
	Define a matrix $X^{(i,j)}\in\mathbb{S}^n$ for every $1\leq i<j\leq n$:
	\begin{align}\label{def:xijsdd}
		X^{(i,j)}_{p,q}:=\begin{cases}
			X_{i,i} & ({\rm if}\ p=q=i),\\
			X_{j,j} & ({\rm if}\ p=q=j),\\
			X_{i,j}&({\rm if}\ (p,q)\in\{(i,j),(j,i)\}),\\
			0&({\rm otherwise}).
		\end{cases}
	\end{align}
	Let $C_n^k:=\frac{n!}{(n-k)!k!}$ and let $\tilde{X}=\frac{1}{C_n^2}\sum_{1\leq i<j\leq n}X^{(i,j)}$. Then, $(\ref{def:xijsdd})$ implies that
	\begin{align*}
		\tilde{X}_{i,i}=&\frac{C_n^2-C_{n-1}^2}{C_n^2}X_{i,i}=\frac{2}{n}X_{i,i}\ (i=1,\ldots,n),\\
		\tilde{X}_{i,j}=&\frac{1}{C_n^2}X_{i,j}=\frac{2}{n(n-1)}X_{i,j}\ (1\leq i<j\leq n).
	\end{align*}
	By $(\ref{def:sdd})$, we know that $X^{(i,j)}\in{\cal S}^n_+$ for all $1\leq i<j\leq n$ and hence $\tilde{X}\in{\cal S}^n_+$. 
	
	Let $\alpha\ge0$ be any scalar. Then, 
	\begin{align}
		\|X-\alpha\tilde{X}\|_F=&\sqrt{\sum_{i=1}^n(X-\alpha\tilde{X})_{i,i}^2+\sum_{i\neq j}(X-\alpha\tilde{X})_{i,j}^2}\nonumber\\
		=&\sqrt{ \sum_{i=1}^n(1-\frac{2\alpha}{n})^2X_{i,i}^2+ \sum_{i\neq j} (1-\frac{2\alpha}{n(n-1)})^2X_{i,j}^2 }\nonumber\\
		\leq&\sqrt{ (1-\frac{2\alpha}{n})^2\sum_{i=1}^nX_{i,i}^2+ (1-\frac{2\alpha}{n(n-1)})^2\sum_{i\neq j} X_{i,i}X_{j,j} }\nonumber\\
		=&\sqrt{ (1-\frac{2\alpha}{n})^2\sum_{i=1}^nX_{i,i}^2+ (1-\frac{2\alpha}{n(n-1)})^2 ({\rm Tr}(X)^2-\sum_{i=1}^nX_{i,i}^2) }\nonumber\\
		=&\sqrt{ \left((1-\frac{2\alpha}{n})^2- (1-\frac{2\alpha}{n(n-1)})^2\right)\sum_{i=1}^nX_{i,i}^2+ (1-\frac{2\alpha}{n(n-1)})^2 {\rm Tr}(X)^2 }\nonumber\\
		=&\sqrt{ \left(1-\frac{4\alpha}{n}+\frac{4\alpha^2}{n^2}- (1-\frac{4\alpha}{n(n-1)}+\frac{4\alpha^2}{n^2(n-1)^2})\right)\sum_{i=1}^nX_{i,i}^2+ (1-\frac{2\alpha}{n(n-1)})^2 {\rm Tr}(X)^2 }\nonumber\\
		=&\sqrt{ \left( \frac{4\alpha^2(n-2)}{n(n-1)^2}-\frac{4\alpha(n-2)}{n(n-1)} \right)\sum_{i=1}^nX_{i,i}^2+ (1-\frac{2\alpha}{n(n-1)})^2 {\rm Tr}(X)^2 }.\label{eq:thmy3:1}
	\end{align}
	Note that Tr$(X)=1$ and $\tilde{\alpha}:=n-1\ge0$ satisfies that $\frac{4\tilde{\alpha}^2(n-2)}{n(n-1)^2}-\frac{4\tilde{\alpha}(n-2)}{n(n-1)} =0$. By substituting $\tilde{\alpha}$ into $(\ref{eq:thmy3:1})$, we have
	\begin{align*}
		\|X-\tilde{\alpha}\tilde{X}\|_F\leq&\sqrt{ (1-\frac{2\tilde{\alpha}}{n(n-1)})^2 }=\frac{n-2}{n}.
	\end{align*}
	Since $\tilde{\alpha}\ge0$ and $\tilde{X}\in{\cal S}^n_+$, by letting $U=\frac{n-2}{n}$, we can see that
	\begin{align*}
		\|X-{\rm P}_{{\cal S}^n_+}(X)\|_F\leq \|X-\tilde{\alpha}\tilde{X}\|_F\leq U=\frac{n-2}{n},
	\end{align*}
	and hence,
	\begin{align*}
		\overline{{\rm dist}}_T({\cal SDD}_n^*,{\cal S}^n_+)=\sup_{X\in{\cal SDD}_n^*,{\rm Tr}(X)=1}\|X-{\rm P}_{{\cal S}^n_+}(X)\|_F\leq U= \frac{n-2}{n}.
	\end{align*}
\end{proof}

\subsection{The trace normalized distance between ${\cal DD}_n^*$ and ${\cal S}^n_+$}
In this section, we prove the following theorem:
\begin{theorem}\label{thmy7}
	For all $n\ge 2$, 
	\begin{align*}
		\overline{{\rm dist}}_T({\cal DD}_n^*,{\cal S}^n_+)=\frac{\sqrt{n}-1}{2}.
	\end{align*}
\end{theorem}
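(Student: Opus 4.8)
The plan is to prove Theorem \ref{thmy7} by establishing matching lower and upper bounds, mirroring the two-lemma structure used for Theorem \ref{thsddt}, but with the crucial difference that the extremal matrix is now governed by the constraints defining ${\cal DD}_n^*$ rather than ${\cal SDD}_n^*$. First I would handle the trivial case $n=2$ separately (where ${\cal DD}_2^*={\cal S}^2_+$ and the distance is $0=\frac{\sqrt{2}-1}{2}$... actually $\frac{\sqrt{2}-1}{2}\ne 0$, so one must check carefully — in fact for $n=2$ one should verify directly what the supremum is), and then assume $n\ge 3$.

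For the lower bound, I would again use a rank-structured test matrix. The natural candidate is $G(a,b,n)=(a+b)I-aee^T$ as in Lemma \ref{thsddlow}, since its eigenvalues $a+b$ (multiplicity $n-1$) and $b-(n-1)a$ (multiplicity $1$) and its projection distance $(n-1)a-b$ are already computed in the excerpt. The membership condition changes: $G(a,b,n)\in{\cal DD}_n^*$ requires $G_{i,i}+G_{j,j}\pm 2G_{i,j}\ge 0$, i.e. $2(a+b)\pm 2(-a)\ge 0$, which gives $2b\ge 0$ and $4a+2b\ge 0$ — essentially no constraint beyond $a,b\ge 0$. So the optimization becomes $\max\{(n-1)a-b : b<(n-1)a,\ a\ge 0,\ nb=1\}$ (using $\mathrm{Tr}(G)=nb$); letting $b=\frac1n$ and $a\to\infty$ this is unbounded, which signals that $G(a,b,n)$ alone does not give the tight bound and that the true extremal matrix for ${\cal DD}_n^*$ must have a different structure — presumably one where several of the defining inequalities $X_{i,i}+X_{j,j}\pm 2X_{i,j}\ge 0$ are active. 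I would therefore look for a test matrix of the form $X=\beta I + \gamma(uu^T)$ or, more likely given the $\sqrt{n}$, a matrix like $X=ee^T/n$ perturbed, or the matrix with $X_{i,i}=1/n$ for all $i$ and off-diagonals chosen to make the negative eigenvalue as large (in magnitude) as possible subject to $|X_{i,j}|\le (X_{i,i}+X_{j,j})/2 = 1/n$; the choice $X_{i,j}=-1/n$ for $i\ne j$ gives $X=\frac2n I - \frac1n ee^T$ up to a diagonal shift — wait, that is again $G$. The $\frac{\sqrt n - 1}{2}$ value strongly suggests the optimal $X$ is $X = \frac{1}{\sqrt n}\cdot\frac{ee^T}{\sqrt n}$-type rank-one-minus-identity construction; concretely I expect $X$ with all diagonal entries equal and all off-diagonal entries equal to the extreme value $\pm$(common diagonal), normalized so $\mathrm{Tr}(X)=1$, but with a scaling that trades off $\|X\|_F$ against the diagonal — the point being that in ${\cal DD}_n^*$ one can make $|X_{i,j}|$ comparable to the diagonal entries, so $\|X\|_F$ can be of order $\sqrt n$ while $\mathrm{Tr}(X)=1$, whereas in ${\cal SDD}_n^*$ the constraint $X_{i,j}^2\le X_{i,i}X_{j,j}$ is more restrictive. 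I would compute the projection distance for this family explicitly (its eigenvalues are again of the form $c_1$ with high multiplicity and $c_2$ simple), optimize over the one remaining parameter, and read off $\frac{\sqrt n-1}{2}$.

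For the upper bound, I would adapt the averaging/symmetrization argument of Lemma \ref{thmy3}: given $X\in{\cal DD}_n^*$ with $\mathrm{Tr}(X)=1$, construct an explicit PSD matrix $\tilde X$ (as a nonnegative combination of the rank-one or small-support PSD matrices certified by the ${\cal DD}_n^*$ inequalities — each inequality $X_{i,i}+X_{j,j}\pm 2X_{i,j}\ge 0$ says a certain $2\times 2$-supported matrix $(e_i\pm e_j)(e_i\pm e_j)^T$ has nonnegative coefficient in a decomposition) and a scalar $\tilde\alpha\ge 0$ so that $\|X-\tilde\alpha\tilde X\|_F\le \frac{\sqrt n-1}{2}$. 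The bookkeeping here differs from the ${\cal SDD}$ case because the natural PSD witnesses are $(e_i\pm e_j)(e_i\pm e_j)^T$ rather than the $2\times 2$ principal-submatrix blocks, so the induced diagonal and off-diagonal scaling factors will be different, and one will again want to pick $\tilde\alpha$ to kill the coefficient of the "bad" term. I anticipate that the main obstacle is precisely identifying the correct extremal matrix and the correct $\tilde X$ construction for the upper bound: unlike the ${\cal SDD}$ case where $G(a,b,n)$ and the principal-$2\times 2$ averaging work cleanly, here the relevant geometry involves the $2^{n-1}$ sign patterns $e_i\pm e_j$, and getting a clean closed form $\frac{\sqrt n - 1}{2}$ out of the Cauchy–Schwarz / norm-comparison step will require a more careful inequality — most likely bounding $\sum_{i\ne j}X_{i,j}^2 \le$ (something) using $|X_{i,j}|\le\frac12(X_{i,i}+X_{j,j})$ together with $\sum_i X_{i,i}=1$ and AM–GM, rather than the straightforward $\sum X_{i,j}^2\le\sum X_{i,i}X_{j,j}$ available in the ${\cal SDD}$ setting. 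I would also keep in mind that the tight test matrix from the lower bound should saturate every inequality used in the upper bound, which serves as a consistency check on the choice of $\tilde X$ and $\tilde\alpha$.
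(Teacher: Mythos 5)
Your proposal does not close either direction of the bound, and the central idea of the paper's proof is missing. On the lower bound: your analysis of $G(a,b,n)=(a+b)I-aee^{T}$ contains a slip --- its diagonal entries are $b$, not $a+b$, so the ${\cal DD}_n^*$ conditions read $2b-2a\ge 0$ and $2b+2a\ge 0$, i.e.\ $b\ge a$, exactly the same restriction as for ${\cal SDD}_n^*$; the optimization is therefore not unbounded and this family again yields only $\frac{n-2}{n}$. Your subsequent guess for the extremal matrix (equal diagonal entries with off-diagonals at the extreme value $\pm$ the common diagonal) is the same $G$ family and also gives $\frac{n-2}{n}$. The matrices that actually attain $\frac{\sqrt{n}-1}{2}$ concentrate the entire trace in one diagonal entry: up to permutation, $X^*_{n,n}=1$, $X^*_{i,n}=\pm\frac12$ for $i<n$, and all other entries $0$; its eigenvalues are $\frac{1\pm\sqrt{n}}{2}$ and $0$, whence the distance $\frac{\sqrt{n}-1}{2}$ (Lemma \ref{thmy8}). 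You never identify this ``arrow'' structure, so even the lower bound is not established by your plan (and, as an aside, ${\cal DD}_2^*\neq{\cal S}^2_+$, so the $n=2$ case is not degenerate).

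The upper bound is the more serious gap. The paper does not adapt the averaging argument of Lemma \ref{thmy3}: it notes that $X\mapsto\|X-{\rm P}_{{\cal S}^n_+}(X)\|_F$ is convex and continuous and that ${\cal DDT}^*_n={\cal DD}^*_n\cap\{{\rm Tr}(X)=1\}$ is a polytope, invokes the Bauer maximum principle so the supremum is attained at an extreme point, and then characterizes all extreme points by counting $\frac{n(n+1)}{2}-1$ linearly independent active inequalities (Proposition \ref{tm:ex}): every extreme point is, up to permutation, the arrow matrix above, and Lemma \ref{thmy8} finishes. Your proposed route --- building a PSD matrix $\tilde X$ from the $2\times 2$-supported witnesses associated with the ${\cal DD}_n^*$ inequalities and choosing a scalar $\tilde\alpha$ --- cannot certify the tight constant: already at the extremal arrow matrix, the best scalar multiple of the averaged matrix $\bar X$ built from the blocks with diagonal $\frac{X_{i,i}+X_{j,j}}{2}$ (as in the proof of Theorem \ref{thmy2}) lies at Frobenius distance of order $\sqrt{n/2}$ from $X$, which exceeds $\frac{\sqrt{n}-1}{2}\approx\frac{\sqrt{n}}{2}$ for large $n$. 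The multiple-of-an-average trick works in Lemma \ref{thmy3} because there the worst cases have the flat $G(a,b,n)$ structure, but the ${\cal DD}_n^*$ extremal matrix is not close to any scalar multiple of its own averaged version, so ``a more careful inequality'' along your lines will stall; a genuinely different mechanism, such as the extreme-point/Bauer argument, is what is needed.
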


The idea behind Theorem \ref{thmy7} is as follows. Define
\begin{align}
	{\cal DDT}^*_n:={\cal DD}^*_n\cap\{X\in\mathbb{S}^n\mid{\rm Tr}(X)=1\}.\label{def:ddt}
\end{align}
Definition $(\ref{deftrdis})$ ensures that
\begin{align*}
	\overline{{\rm dist}}_T({\cal DD}_n^*,{\cal S}^n_+)=\max_{X\in{\cal DDT}^*_n}\|X-{\rm P}_{{\cal S}^n_+}(X)\|_F.
\end{align*}
Note that $\|X-{\rm P}_{{\cal S}^n_+}(X)\|_F$ is continuous and convex on $\mathbb{S}^n$ and ${\cal DDT}^*_n$ is closed, bounded, and convex. The Bauer maximum principle \cite{bauer1958} states that any continuous convex function defined on a compact convex set in $\mathbb{R}^n$ attains its maximum at some extreme point of the set. As a corollary, we have the following:

\setcounter{theorem}{0}
\begin{corollary}\label{colo:ex}
	$\max_{X\in{\cal DDT}^*_n}\|X-{\rm P}_{{\cal S}^n_+}(X)\|_F$ attains its maximum at some extreme point of ${\cal DDT}^*_n$.
\end{corollary}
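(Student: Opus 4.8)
The plan is to verify that the hypotheses of the Bauer maximum principle \cite{bauer1958} are satisfied and then invoke it. Write $f(X):=\|X-{\rm P}_{{\cal S}^n_+}(X)\|_F$. Since ${\rm P}_{{\cal S}^n_+}(X)$ is the metric projection onto the nonempty closed convex cone ${\cal S}^n_+$, we have $f(X)=\min_{Y\in{\cal S}^n_+}\|X-Y\|_F$, i.e.\ $f$ is the Euclidean distance from $X$ to ${\cal S}^n_+$. First I would record two standard facts: (i) the distance to a nonempty convex set is a convex function — it is the partial minimization of the jointly convex map $(X,Y)\mapsto\|X-Y\|_F$ over the convex set ${\cal S}^n_+$; and (ii) $f$ is $1$-Lipschitz with respect to $\|\cdot\|_F$, hence continuous on $\mathbb{S}^n$.

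Next I would verify that ${\cal DDT}^*_n$ is a nonempty compact convex set. It is nonempty since $\frac1n I\in{\cal DDT}^*_n$. By $(\ref{def:dd})$, ${\cal DD}^*_n$ is cut out by finitely many linear inequalities, so it is a polyhedral convex cone; intersecting with the affine hyperplane $\{X\in\mathbb{S}^n\mid{\rm Tr}(X)=1\}$ leaves a convex set that is closed, being an intersection of closed half-spaces with a closed hyperplane. For boundedness, take $X\in{\cal DDT}^*_n$: setting $i=j$ in the inequalities defining ${\cal DD}^*_n$ gives $X_{i,i}\ge0$ for every $i$, and together with $\sum_{i=1}^nX_{i,i}={\rm Tr}(X)=1$ this forces $X_{i,i}\in[0,1]$; moreover $X_{i,i}+X_{j,j}\pm 2X_{i,j}\ge0$ yields $|X_{i,j}|\le\frac12(X_{i,i}+X_{j,j})\le1$ for $i<j$. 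Hence every entry of $X$ lies in $[-1,1]$, so ${\cal DDT}^*_n$ is bounded, and therefore compact.

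Finally, applying the Bauer maximum principle to the continuous convex function $f$ on the nonempty compact convex set ${\cal DDT}^*_n\subseteq\mathbb{S}^n\cong\mathbb{R}^{n(n+1)/2}$, the maximum of $f$ over ${\cal DDT}^*_n$ is attained, and it is attained at some extreme point of ${\cal DDT}^*_n$, which is exactly the claim. I do not anticipate a genuine obstacle: the only step needing a short argument rather than a citation is the boundedness of ${\cal DDT}^*_n$, which, as above, follows immediately from the diagonal-dominance-type inequalities combined with the normalization ${\rm Tr}(X)=1$.
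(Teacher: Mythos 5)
Your proof is correct and follows the same route as the paper: verify that $X\mapsto\|X-{\rm P}_{{\cal S}^n_+}(X)\|_F$ is continuous and convex and that ${\cal DDT}^*_n$ is compact and convex, then invoke the Bauer maximum principle. The only difference is that you spell out the hypotheses (Lipschitz continuity, convexity via partial minimization, boundedness from $X_{i,i}\ge0$, ${\rm Tr}(X)=1$ and $|X_{i,j}|\le\frac12(X_{i,i}+X_{j,j})$), which the paper simply asserts.
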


Proposition \ref{tm:ex} shows that every extreme point of ${\cal DDT}^*_n$ has a special structure.  Lemma \ref{thmy8} uses this special structure to show that the distance from each extreme point of ${\cal DDT}^*_n$ to ${\cal S}^n_+$ is the same. The exact distance is also given in Lemma \ref{thmy8}. Theorem \ref{thmy7} follows directly from Corollary \ref{colo:ex} and Lemma \ref{thmy8}.

\setcounter{theorem}{0}
\begin{proposition}\label{tm:ex}
	For $n\ge2$, let $X$ be an extreme point of ${\cal DDT}^*_n$. There exists an integer $q$ satisfying $1\leq q\leq n$ such that
	\begin{align}
		X_{i,j}=\begin{cases}
			1 & (\mbox{ if}\ i=j=q),\\
			\frac{1}{2} \mbox{ or }-\frac{1}{2} &(\mbox{ if either}\ i=q \mbox{ or }j=q), \\
			0&(\mbox{ otherwise}).
		\end{cases}\label{xijpro}
	\end{align}
\end{proposition}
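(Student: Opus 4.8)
\textbf{Proof plan for Proposition \ref{tm:ex}.}
The plan is to characterize the extreme points of ${\cal DDT}^*_n$ by a dimension-counting / active-constraint argument, since ${\cal DDT}^*_n$ is a polyhedron: it is cut out of $\mathbb{S}^n$ by the linear inequalities $X_{i,i}+X_{j,j}\pm 2X_{i,j}\ge 0$ (for $1\le i\le j\le n$) together with the single equality ${\rm Tr}(X)=1$. An extreme point of a polyhedron in the $\binom{n+1}{2}$-dimensional space $\mathbb{S}^n$ must make enough of these inequalities tight that the corresponding normal vectors, together with the gradient $I$ of the trace constraint, span $\mathbb{S}^n$; equivalently, the only symmetric matrix $H$ with ${\rm Tr}(H)=0$ satisfying $H_{i,i}+H_{j,j}\pm 2H_{i,j}=0$ on every active constraint is $H=0$. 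The first step is therefore to fix an extreme point $X$, let $\mathcal{A}$ denote its set of active inequalities, and translate ``$X$ is extreme'' into ``the homogeneous system indexed by $\mathcal{A}$ plus ${\rm Tr}=0$ has only the trivial solution.''

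The second step is to exploit the diagonal constraints $i=j$: the constraint from $(i,i)$ reads $2X_{i,i}+2X_{i,i}\ge 0$, i.e.\ $X_{i,i}\ge 0$, so each diagonal entry is nonnegative, and since they sum to $1$ at least one is positive. I would show that an extreme point can have \emph{at most one} strictly positive diagonal entry: if $X_{p,p}>0$ and $X_{q,q}>0$ with $p\ne q$, I will construct a nonzero feasible direction $H$ supported on rows/columns $p$ and $q$ (for instance a multiple of $E_{pp}-E_{qq}$, possibly corrected on the $(p,q)$ entry) that keeps every currently-active constraint active and preserves the trace, contradicting extremality — here one uses that for the indices $i$ with $X_{i,i}=0$ the off-diagonal constraints force $X_{i,j}=0$ for all $j$, so those rows impose no obstruction. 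This pins down a unique index $q$ with $X_{q,q}>0$, and then ${\rm Tr}(X)=1$ together with $X_{i,i}\ge 0$ and $\sum_i X_{i,i}=1$ forces $X_{q,q}=1$ and $X_{i,i}=0$ for $i\ne q$ once we also know (from the previous sentence) that every off-diagonal entry not touching $q$ vanishes and every diagonal entry other than $q$ vanishes.

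The third step handles the entries $X_{q,j}$ for $j\ne q$. With $X_{q,q}=1$ and $X_{j,j}=0$, the pair of constraints from index $(q,j)$ becomes $1+0\pm 2X_{q,j}\ge 0$, i.e.\ $|X_{q,j}|\le \tfrac12$. I must argue that at an extreme point each such $X_{q,j}$ sits at an endpoint $\pm\tfrac12$: if some $|X_{q,r}|<\tfrac12$ strictly, then moving $X_{q,r}$ by $\pm\varepsilon$ (and symmetrically $X_{r,q}$) gives a feasible segment through $X$ with no other active constraint disturbed — the diagonal entries and the trace are untouched, and the other off-diagonal constraints are unaffected since they involve disjoint index pairs — again contradicting extremality. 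Combining the three steps yields exactly the form $(\ref{xijpro})$.

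The main obstacle I anticipate is the careful bookkeeping in the second step: showing that two positive diagonal entries are incompatible with extremality requires exhibiting an explicit perturbation direction $H$ and verifying it respects \emph{all} active constraints, including the diagonal ones at $p$ and $q$ (which are active only if $X_{p,p}=0$ or $X_{q,q}=0$, so in fact they are \emph{not} active here) and the off-diagonal ones coupling $p$ or $q$ to a third index; one has to check that a small enough perturbation keeps the non-tight ones non-tight while the direction is chosen inside the tight ones. Once the structural rigidity of the rows indexed by the zero-diagonal entries is established (off-diagonal entries there are forced to be $0$), this perturbation argument goes through cleanly, and the rest is elementary.
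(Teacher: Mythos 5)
Your overall strategy (characterize extremality via feasible perturbation directions, then rule out two positive diagonal entries and pin the off-diagonal entries at $\pm\frac12$) is a legitimate alternative to the paper's argument, which instead counts linearly independent active constraints: the paper invokes the polyhedral characterization $N(\bar X)=\frac{n(n+1)}{2}-1$ and shows that having $k\ge 2$ positive diagonal entries caps the count at $\frac{n(n+1)}{2}-k$, forcing $k=1$. However, your step 2 as written has a genuine gap. The supporting claim ``for the indices $i$ with $X_{i,i}=0$ the off-diagonal constraints force $X_{i,j}=0$ for all $j$'' is false: the constraints $X_{i,i}+X_{j,j}\pm 2X_{i,j}\ge 0$ only force $|X_{i,j}|\le \frac{X_{j,j}}{2}$, which vanishes only when $X_{j,j}=0$ as well. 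Indeed the extreme points themselves, described in (\ref{xijpro}), have $X_{i,i}=0$ together with $X_{i,q}=\pm\frac12$, so the claim fails exactly in the situation you need it. As a consequence, the direction $H=t(E_{pp}-E_{qq})$ corrected only on the $(p,q)$ entry need not stay feasible: if, say, $X_{r,r}=0$ and $X_{p,r}=\frac{X_{p,p}}{2}$, then the constraint $X_{p,p}+X_{r,r}-2X_{p,r}\ge0$ is active at $X$, and moving along $-H$ (decreasing $X_{p,p}$ without touching $X_{p,r}$) violates it, so the two-sided feasibility needed to contradict extremality is lost. The same issue arises for active constraints coupling $p$ or $q$ to a third index $r$ with $X_{r,r}>0$, which your sketch does not address at all.

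The gap is repairable within your framework, but it requires a genuinely larger correction than the $(p,q)$ entry. Since $X_{p,p}>0$, for each pair $(p,r)$ at most one of the two signed constraints $X_{p,p}+X_{r,r}\pm 2X_{p,r}\ge 0$ can be active (both active would force $X_{p,p}=X_{r,r}=0$); so whenever the constraint with sign $\sigma_{pr}\in\{+1,-1\}$ is active you can set $H_{p,r}=-\sigma_{pr}\,t/2$, and symmetrically $H_{q,r}=+\sigma_{qr}\,t/2$ for active constraints in column $q$, with $H_{p,q}=0$, $H_{p,p}=t$, $H_{q,q}=-t$, and all other entries zero. This keeps every active constraint active (including the trace and the active diagonal constraints, which only occur at indices with $H_{i,i}=0$), keeps inactive constraints slack for small $t$, and yields the desired contradiction; your step 3 is then fine as stated. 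With that repair your proof is a valid, more hands-on alternative to the paper's counting argument; as written, however, the pivotal step rests on a false lemma.
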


\begin{proof}
	Let $X\in{\cal DDT}^*_n$. By (\ref{def:dd}) and (\ref{def:ddt}), we see that for every $i=1,\ldots,n$, 
	\begin{align*}
		X_{i,i}\ge0,
	\end{align*}
	and for every $1\leq i<j\leq n$, 
	\begin{align}
		&X_{i,i}+X_{j,j}+2X_{i,j}\ge0, \label{eq:ex:2}\\
		&X_{i,i}+X_{j,j}-2X_{i,j}\ge0. \label{eq:ex:3} 
	\end{align}
	Thus, the set ${\cal DDT}^*_n$ can be written as 
	\begin{align}
		{\cal DDT}^*_n=\{X\in\mathbb{S}^n\mid& {\rm Tr}(X)=1,\nonumber\\
		&X_{i,i}\ge0\ ( i=1,\ldots,n), \label{eq:ex:1}\\
		&X_{i,i}+X_{j,j}+2X_{i,j}\ge0\ (1\leq i<j\leq n), \label{eq:ex:4}\\
		&X_{i,i}+X_{j,j}-2X_{i,j}\ge0\ (1\leq i<j\leq n) \label{eq:ex:5}\}.
	\end{align}
	
	Let $\bar{X}$ be an extreme point of ${\cal DDT}^*_n$ and let $N(X)$ be the number of linearly independent inequalities in (\ref{eq:ex:1}), (\ref{eq:ex:4}) and (\ref{eq:ex:5}) that are active (i.e., the equalities hold) at $X\in{\cal DDT}^*_n$. From a characterization of the extreme points of a polyhedron (see, e.g., Theorem 5.7, \cite{schrijver2003}), we know that
	\begin{align}
		N(\bar{X})=\frac{n(n+1)}{2}-1.\label{eq:numext}
	\end{align}
	Below, we prove that $\bar{X}$ satisfies (\ref{xijpro}) by observing the active inequalities at $\bar{X}$. 
	
	It follows from Tr$(\bar{X})=1$ that $\bar{X}$ has at least one nonzero diagonal element. This implies that the number of active inequalities in $(\ref{eq:ex:1})$ at $\bar{X}$ is at most $n-1$. Suppose that $n-k$ inequalities in $(\ref{eq:ex:1})$ are active at $\bar{X}$, where $k$ is an integer and $1\leq k\leq n$. Below, we show that $k\neq n$ by contradiction.
	
	Assume that $k=n$. Then we have $\bar{X}_{i,i}>0$ for each $1\leq i \leq n$. At most one of $(\ref{eq:ex:2})$ and $(\ref{eq:ex:3})$ can be active at $\bar{X}$ for each $1\leq i<j\leq n$. In fact, suppose that $(\ref{eq:ex:2})$ and $(\ref{eq:ex:3})$ are simultaneously active for some $1\leq i<j\leq n$:
	\begin{align*}
		\bar{X}_{i,i}+\bar{X}_{j,j}+2\bar{X}_{i,j}=0,\ \bar{X}_{i,i}+\bar{X}_{j,j}-2\bar{X}_{i,j}=0.
	\end{align*}
	Then, $\bar{X}_{i,j}=\bar{X}_{i,i}+\bar{X}_{j,j}=0$ and since $\bar{X}_{i,i},\bar{X}_{j,j}\ge0$, we obtain $\bar{X}_{i,i}=\bar{X}_{j,j}=0$, which is a contradiction to the assumption $\bar{X}_{i,i},\bar{X}_{j,j}>0$. This implies that $N(\bar{X})$ is at most $\frac{n(n-1)}{2}$, which is strictly less than the number $\frac{n(n+1)}{2}-1$ in (\ref{eq:numext}). This contradiction implies that $k\neq n$.
	
	Since we have shown that $1\leq k\leq n-1$, there exists a permutation matrix $P\in\mathbb{R}^{n\times n}$ such that the matrix $X^*:=P\bar{X}P^T$ satisfies 
	\begin{align}
		&X^*_{i,i}=0\hspace{3mm} (1\leq i\leq n-k),\label{eq:ex:6} \\
		&X^*_{i,i}>0\hspace{3mm} (n-k+1\leq i \leq n).\nonumber
	\end{align}
	Note that $X^*\in{\cal DDT}^*_n$ and $N(X^*)=\frac{n(n+1)}{2}-1$. Below, we show that $X^*$ satisfies (\ref{xijpro}) by observing the active inequalities at $X^*$ instead of $\bar{X}$. 
	
	Next, we show that $k=1$; i.e., exactly $n-1$ inequalities in $(\ref{eq:ex:1})$ are active at $X^*$. It follows from (\ref{eq:ex:4}), (\ref{eq:ex:5}) and (\ref{eq:ex:6}) that $X^*_{i,j}=0$ for each $1\leq i<j\leq n-k$. This implies that all inequalities (\ref{eq:ex:2}) and (\ref{eq:ex:3}) with $1\leq i<j\leq n-k$ at $X^*$ are active. For each pair of $(i,j)$ where $X_{j,j}>0$ and $1\leq i< j$, one can show again by contradiction that at most one of $(\ref{eq:ex:2})$ and $(\ref{eq:ex:3})$ can be active at $X^*$. Consider the case when the number of active inequalities at $X^*$ attains its maximum; i.e., exactly one of $(\ref{eq:ex:2})$ and $(\ref{eq:ex:3})$ is active at $X^*$ for each pair of $(i,j)$, where $n-k+1\leq j\leq n$ and $1\leq i<j$. The following system,
	\begin{align*}
		\begin{cases}
			0=X^*_{i,i}\hspace{3mm} (1\leq i\leq n-k), \\
			0=X^*_{i,i}+X^*_{j,j}+2X^*_{i,j}\hspace{3mm} (1\leq i<j \leq n-k), \\
			0=X^*_{i,i}+X^*_{j,j}-2X^*_{i,j}\hspace{3mm} (1\leq i<j \leq n-k), \\
			\mbox{either (\ref{eq:ex:2}) or (\ref{eq:ex:3}) is active at }X^* \hspace{3mm} (n-k+1\leq j\leq n,\ 1\leq i<j),
		\end{cases}
	\end{align*}
	includes exactly $(n-k)+\frac{n(n-1)}{2}=\frac{n(n+1)}{2}-k$ linearly independent active inequalities. This implies that $N(X^*)\leq\frac{n(n+1)}{2}-k$. By (\ref{eq:numext}), we know that $k=1$ and $\frac{n(n+1)}{2}-1$ in (\ref{eq:numext}) is attained only if the number of active inequalities in $(\ref{eq:ex:4})$ and $(\ref{eq:ex:5})$ attains its maximum. 
	
	$k=1$ implies that $X^*_{n,n}=1$ and $X^*_{i,i}=0$ for each $1\leq i\leq n-1$; and hence, $X^*_{i,j}=0$ for each $1\leq i<j\leq n-1$. Since the number of active inequalities in $(\ref{eq:ex:4})$ and $(\ref{eq:ex:5})$ attains its maximum, we know that either (\ref{eq:ex:2}) or (\ref{eq:ex:3}) is active at $X^*$ for each $(i,j)$ satisfying $j=n$ and $1\leq i<j$, which implies that $X^*_{i,n}\in\{\frac{1}{2},-\frac{1}{2}\}$ for each $1\leq i<n$. 
	
	Finally, by applying the permutation $\bar{X}=P^TX^*P$, we know that there exists an integer $q$ satisfying $1\leq q\leq n$ for which $\bar{X}$ satisfies (\ref{xijpro}).
	
\end{proof}

\setcounter{theorem}{2}
\begin{lemma}\label{thmy8}
	For $n\ge2$, let $X$ be an extreme point of ${\cal DDT}_n^*$. There exist scalars $\alpha_1,\ldots,\alpha_{n-1}\in\{\frac{1}{2},-\frac{1}{2}\}$ such that the following matrix,
	{\begin{align}
			X^*:=\left(
			\begin{array}{cccc}
				0 & & & \alpha_1 \\
				& \ddots & &\vdots \\
				&& 0 &\alpha_{n-1} \\
				\alpha_1& \ldots &\alpha_{n-1}&1
			\end{array}
			\right)\label{eq:max}
	\end{align}}
	satisfies
	\begin{align*}
		\|X-{\rm P}_{{\cal S}^n_+}(X)\|_F=\|X^*-{\rm P}_{{\cal S}^n_+}(X^*)\|_F=\frac{\sqrt{n}-1}{2}.
	\end{align*}
\end{lemma}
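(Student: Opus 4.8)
The plan is to use Proposition~\ref{tm:ex} to reduce an arbitrary extreme point $X$ of ${\cal DDT}_n^*$ to the concrete ``arrow'' matrix $X^*$ of (\ref{eq:max}), and then to read off $\|X^*-{\rm P}_{{\cal S}^n_+}(X^*)\|_F$ directly from the spectrum of $X^*$. First I would note that Proposition~\ref{tm:ex} yields an index $q$ (and signs) such that, after relabeling coordinates so that position $q$ becomes position $n$ --- that is, conjugating $X$ by the permutation matrix $P$ associated with the transposition of $q$ and $n$ --- the matrix $X^*:=PXP^T$ is exactly of the form (\ref{eq:max}) with entries $a_i=\alpha_i\in\{\tfrac12,-\tfrac12\}$. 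Since $Y\mapsto PYP^T$ is an orthogonal transformation of $\mathbb{S}^n$, it preserves $\|\cdot\|_F$ and maps ${\cal S}^n_+$ onto itself, so it commutes with the metric projection; hence $\|X-{\rm P}_{{\cal S}^n_+}(X)\|_F=\|X^*-{\rm P}_{{\cal S}^n_+}(X^*)\|_F$, and it remains to evaluate the latter.

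Second, I would compute the spectrum of $X^*$ without expanding a characteristic polynomial. With $\hat a:=(a_1,\dots,a_{n-1},0)^T$ and $e_n$ the $n$-th standard basis vector, one has $X^*=\hat a e_n^T+e_n\hat a^T+e_ne_n^T$. Hence the $(n-2)$-dimensional space $\{x\in\mathbb{R}^n:x_n=0,\ \hat a^Tx=0\}$ lies in $\ker X^*$, and $\mathrm{span}\{\hat a,e_n\}$ is $X^*$-invariant with $X^*\hat a=\|\hat a\|^2 e_n=\tfrac{n-1}{4}e_n$ and $X^*e_n=\hat a+e_n$. On this invariant plane $X^*$ has trace $1$ and determinant $-\tfrac{n-1}{4}$, so its two nonzero eigenvalues are the roots of $\lambda^2-\lambda-\tfrac{n-1}{4}=0$, namely $\tfrac{1+\sqrt n}{2}>0$ and $\tfrac{1-\sqrt n}{2}<0$, while the remaining $n-2$ eigenvalues are $0$. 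In particular the spectrum is independent of the signs of the $a_i$, so no separate normalization is required.

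Third, I would invoke the standard fact that, for the Frobenius norm, ${\rm P}_{{\cal S}^n_+}(X^*)$ is obtained from a spectral decomposition of $X^*$ by zeroing out the negative eigenvalues, so that $\|X^*-{\rm P}_{{\cal S}^n_+}(X^*)\|_F$ equals the Euclidean norm of the vector of negative eigenvalues of $X^*$, which here is $\left|\tfrac{1-\sqrt n}{2}\right|=\tfrac{\sqrt n-1}{2}$. Combined with the first step, this gives $\|X-{\rm P}_{{\cal S}^n_+}(X)\|_F=\|X^*-{\rm P}_{{\cal S}^n_+}(X^*)\|_F=\tfrac{\sqrt n-1}{2}$.

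The main obstacle I anticipate is the second step: one must exhibit the $X^*$-invariant plane $\mathrm{span}\{\hat a,e_n\}$ and the $(n-2)$-dimensional kernel carefully so that the nonzero eigenvalues $\tfrac{1\pm\sqrt n}{2}$ drop out of a clean $2\times2$ computation (and in particular so that $X^*$ has exactly one negative eigenvalue for every $n\ge2$); the reduction to the arrow matrix in step one and the spectral formula for the projection in step three are routine.
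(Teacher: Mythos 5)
Your proposal is correct and follows essentially the same route as the paper: reduce to the arrow matrix $X^*$ via Proposition~\ref{tm:ex} and the orthogonality of the permutation, then observe that $X^*$ has exactly one negative eigenvalue $\frac{1-\sqrt n}{2}$ and apply the spectral formula for the Frobenius projection onto ${\cal S}^n_+$. The only difference is cosmetic: the paper obtains the eigenvalues by solving $\det(\lambda I-X^*)=0$ directly, while you derive them more structurally from the decomposition $X^*=\hat a e_n^T+e_n\hat a^T+e_ne_n^T$, the kernel $\{x: x_n=0,\ \hat a^Tx=0\}$, and the trace and determinant of the restriction to the invariant plane ${\rm span}\{\hat a,e_n\}$ --- a clean and equally valid computation.
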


\begin{proof}
	Let $X$ be an extreme point of ${\cal DDT}_n^*$. By Proposition \ref{tm:ex}, there exists an integer $q$ such that $1\leq q\leq n$ for which $X$ satisfies (\ref{xijpro}). Note that $X$ only has one nonzero diagonal element $X_{q,q}=1$. Let $P\in\mathbb{R}^{n\times n}$ be a permutation matrix such that $(PXP^T)_{n,n}=1$. It is easy to see that there are scalars $\alpha_1,\ldots,\alpha_{n-1}\in\{\frac{1}{2},-\frac{1}{2}\}$ such that the matrix $X^*$ defined in $(\ref{eq:max})$ satisfies $X^*=PXP^T$. Since the permutation matrix $P$ is orthogonal, we see that $Y\in{\cal S}^n_+$ if and only if $PYP^T\in{\cal S}^n_+$ for any $Y\in\mathbb{S}^n$. This fact implies that
	\begin{align}
		\|X-{\rm P}_{{\cal S}^n_+}(X)\|_F=&\inf_{Y\in{\cal S}^n_+}\|X-Y\|_F\nonumber\\
		=&\inf_{Y\in{\cal S}^n_+}\|PXP^T-PYP^T\|_F\nonumber\\
		=&\inf_{PYP^T\in{\cal S}^n_+}\|X^*-PYP^T\|_F\nonumber\\
		=&\|X^*-{\rm P}_{{\cal S}^n_+}(X^*)\|_F.\label{eq:diseq}
	\end{align}
	
	By solving the eigenvalue equation $ 0=|\lambda I-X^*|$ with respect to the scalar $\lambda$, we obtain that:
	\begin{description}
		\item[1.] If $n=2$, the eigenvalues of $X^*$ are $\frac{1+\sqrt{n}}{2}$ with multiplicity $1$ and $\frac{1-\sqrt{n}}{2}$ with multiplicity $1$.
		\item[2.] If $n\ge3$,  the eigenvalues of $X^*$ are $\frac{1+\sqrt{n}}{2}$ with multiplicity $1$, $\frac{1-\sqrt{n}}{2}$ with multiplicity $1$ and $0$ with multiplicity $n-2$.
	\end{description}
	From these observations, for every $n\ge2$, $X^*$ has only one negative eigenvalue $\lambda_{\rm min}:=\frac{1-\sqrt{n}}{2}$; hence,
	\begin{align}
		\|X^*-{\rm P}_{{\cal S}^n_+}(X^*)\|_F=\sqrt{\lambda_{\rm min}^2}=\frac{\sqrt{n}-1}{2}.\label{eqeig}
	\end{align}
	
	We conclude from (\ref{eq:diseq}) and (\ref{eqeig}) that
	\begin{align*}
		\|X-{\rm P}_{{\cal S}^n_+}(X)\|_F=\|X^*-{\rm P}_{{\cal S}^n_+}(X^*)\|_F=\frac{\sqrt{n}-1}{2}.
	\end{align*}
	
\end{proof}

\section{Concluding remarks}
\label{sec:3}
We showed that the norm normalized distance $\overline{{\rm dist}}_F({\cal S},{\cal S}^n_+)$ has the same value whenever ${\cal SDD}_n^* \subseteq {\cal S} \subseteq {\cal DD}_n^*$, since $\overline{{\rm dist}}_F({\cal DD}_n^*,{\cal S}^n_+)=\overline{{\rm dist}}_F({\cal SDD}_n^*,{\cal S}^n_+)$ holds. This implies that the norm normalized distance is not a sufficient measure to evaluate these approximations. Moreover, as a new measure to compensate for the weakness of that distance, we proposed a new distance, the trace normalized distance $\overline{{\rm dist}}_T({\cal S},{\cal S}^n_+)$. Using this new measure, we proved that $\overline{{\rm dist}}_T({\cal DD}_n^*,{\cal S}^n_+)$ and $\overline{{\rm dist}}_T({\cal SDD}_n^*,{\cal S}^n_+)$ are different, i.e., $\overline{{\rm dist}}_T({\cal DD}_n^*,{\cal S}^n_+)=\frac{\sqrt{n}-1}{2}$ and $\overline{{\rm dist}}_T({\cal SDD}_n^*,{\cal S}^n_+)= \frac{n-2}{n}$.

In \cite{wang2021}, the authors proposed a class of polyhedral approximations of the semidefinite cone, denoted as ${\cal SDB}_n^*$. The experimental results on cutting-plane methods, where ${\cal SDB}_n^*$ is used as an approximation of ${\cal S}^n_+$ for solving SDP instances are promising. It is an interesting but also challenging issue to analyze the value of $\overline{{\rm dist}}_T({\cal SDB}_n^*,{\cal S}^n_+)$.{ To evaluate approximations of the semidefinite cone, one may also study general properties of the set of eigenvalues that are attained by matrices in the approximation sets. For example, Kozhasov \cite{kozhasov} proved that the set of eigenvalue vectors of matrices in ${\cal S}^{4,2}={\cal SDD}^*_4$ is not convex. It is also an interesting and challenging direction to study the set of eigenvalues that are attained by matrices in ${\cal DD}^*_n$ and ${\cal SDD}^*_n$ respectively.}

\section{Acknowledgments}
This research was supported by the Japan Society for the Promotion of Science through a Grant-in-Aid for Scientific Research ((B)19H02373) from the Ministry of Education, Culture, Sports, Science and Technology of Japan. {The authors would like to sincerely thank the anonymous reviewers for their thoughtful and valuable comments which have significantly improved the paper.}

\section*{Appendix}
\begin{appendix}
	
	\section{Proof of Theorem \ref{thmy2}}\label{app1}
	
	\begin{proof}
		We prove the following inequalities:
		\begin{align}
			\frac{n-2}{n}\leq \overline{\mbox{dist}}_F({\cal SDD}_n^*,{\cal S}^n_+)\leq \overline{\mbox{dist}}_F({\cal DD}_n^*,{\cal S}^n_+)\leq \frac{n-2}{n}.\label{eq:ineq}
		\end{align}
		
		The relation ${\cal S}^{n,2}={\cal SDD}_n^*$ in $(\ref{setrela})$ implies that $\overline{\mbox{dist}}_F({\cal SDD}_n^*,{\cal S}^n_+)=\overline{\mbox{dist}}_F({\cal S}^{n,2},{\cal S}^n_+)$. By Theorem 3 in \cite{blekherman2020}, we know that $\overline{\mbox{dist}}_F({\cal S}^{n,k},{\cal S}^n_+)\ge\frac{n-k}{\sqrt{(k-1)^2n+n(n-1)}}$ and hence that,
		\begin{align}
			\overline{\mbox{dist}}_F({\cal SDD}_n^*,{\cal S}^n_+)=\overline{\mbox{dist}}_F({\cal S}^{n,2},{\cal S}^n_+)\ge\frac{n-2}{\sqrt{(2-1)^2n+n(n-1)}}=\frac{n-2}{n}.\label{eq:low}
		\end{align}
		
		The relation ${\cal SDD}_n^*\subseteq{\cal DD}_n^*$ in $(\ref{setrela})$ ensures that 
		\begin{align}
			\overline{\mbox{dist}}_F({\cal SDD}_n^*,{\cal S}^n_+)\leq\overline{\mbox{dist}}_F({\cal DD}_n^*,{\cal S}^n_+).\label{eq:high1}
		\end{align}
		Next, we prove that $\overline{\rm dist}_F({\cal DD}_n^*,{\cal S}^n_+)\leq \frac{n-2}{n}$ with the following idea. If a scalar $U$ satisfies $\|X-{\rm P}_{{\cal S}^n_+}(X)\|_F\leq U$ for every $X\in{\cal DD}_n^*$ with $\|X\|_F=1$, then $U$ is an upper bound on $\overline{{\rm dist}}_F({\cal DD}_n^*,{\cal S}^n_+)$. We can find such a scalar $U$ by constructing a matrix $\tilde{X}\in{\cal S}^n_+$ and a scalar $\tilde{\alpha}\ge0$ for every $X\in{\cal DD}_n^*$ with $\|X\|_F=1$ in such a way that $\|X-{\rm P}_{{\cal S}^n_+}(X)\|_F\leq\|X-\tilde{\alpha}\tilde{X}\|_F$. 
		
		Let $X$ be a matrix in ${\cal DD}_n^*$ satisfying $\|X\|_F=1$. Define a matrix $X^{(i,j)}\in\mathbb{S}^n$ for every $1\leq i<j\leq n$ :
		\begin{align}\label{def:xij}
			X^{(i,j)}_{p,q}:=\begin{cases}
				\frac{X_{i,i}+X_{j,j}}{2} & ({\rm if}\ p=q\in\{i,j\}),\\
				X_{i,j}&({\rm if}\ (p,q)\in\{(i,j),(j,i)\}),\\
				0&({\rm otherwise}).
			\end{cases}
		\end{align}
		Let $\bar{X}:={\frac{2}{n(n-1)}}\sum_{1\leq i<j\leq n}X^{(i,j)}$. By definitions $(\ref{def:dd})$ and $(\ref{def:xij})$, one can verify that $X^{(i,j)}\in{\cal S}^n_+$ for all $1\leq i<j\leq n$ and hence that $\bar{X}\in{\cal S}^n_+$. Let $\alpha$ be a scalar satisfying $\alpha\ge\frac{2n(n-1)}{3n-4}>0$. For all $1\leq i<j\leq n$, we can obtain from $(\ref{def:xij})$ that $\bar{X}_{i,j}=\bar{X}_{j,i}={\frac{2}{n(n-1)}}X_{i,j}$ and hence that
		\begin{align}
			\sum_{i\neq j}(X_{i,j}-\alpha\bar{X}_{i,j})^2=\sum_{i\neq j}(1-{\frac{2\alpha}{n(n-1)}})^2X_{i,j}^2.\label{eq:thmyf:7}
		\end{align}
		For all $i=1,\ldots,n$, $(\ref{def:xij})$ implies that $\bar{X}_{i,i}={\frac{2}{n(n-1)}}\left(\frac{n-2}{2}X_{i,i}+\frac{1}{2}{\rm Tr}(X)\right)$ and hence that
		\begin{align}
			\sum_{i=1}^n(X_{i,i}-\alpha\bar{X}_{i,i})^2=&\sum_{i=1}^n((1-\frac{\alpha(n-2)}{n(n-1)})X_{i,i}-\frac{\alpha}{n(n-1)}{\rm Tr}(X))^2\nonumber\\
			=&\sum_{i=1}^n((1-\frac{\alpha(n-2)}{n(n-1)})^2X_{i,i}^2-2(1-\frac{\alpha(n-2)}{n(n-1)})X_{i,i}\frac{\alpha}{n(n-1)}{\rm Tr}(X)\nonumber\\
			&+\frac{\alpha^2}{n^2(n-1)^2}{\rm Tr}(X)^2)\nonumber\\
			=&(1-\frac{\alpha(n-2)}{n(n-1)})^2\sum_{i=1}^nX_{i,i}^2-({\frac{2\alpha}{n(n-1)}-\frac{2\alpha^2(n-2)}{n^2(n-1)^2}}){\rm Tr}(X)\sum_{i=1}^nX_{i,i}\nonumber\\
			&+\frac{\alpha^2n}{n^2(n-1)^2}{\rm Tr}(X)^2\nonumber\\
			=&(1-\frac{\alpha(n-2)}{{n(n-1)}})^2\sum_{i=1}^nX_{i,i}^2+(\frac{\alpha^2(3n-4)}{n^2(n-1)^2}-{\frac{2\alpha}{n(n-1)}}){\rm Tr}(X)^2.\label{eq:eqtr1}
		\end{align}
		The assumption $\alpha\ge\frac{2n(n-1)}{3n-4}$ ensures that $\frac{\alpha^2(3n-4)}{n^2(n-1)^2}-{\frac{2\alpha}{n(n-1)}}\ge0$. One can verify that ${\rm Tr}(X)^2\leq n\sum_{i=1}^nX_{i,i}^2$ by using the Cauchy-Schwarz inequality. Then, it follows from $(\ref{eq:eqtr1})$ that
		\begin{align}
			\sum_{i=1}^n(X_{i,i}-\alpha\bar{X}_{i,i})^2\leq&(1-\frac{\alpha(n-2)}{n(n-1)})^2\sum_{i=1}^nX_{i,i}^2+(\frac{\alpha^2(3n-4)}{n^2(n-1)^2}-{\frac{2\alpha}{n(n-1)}})n\sum_{i=1}^nX_{i,i}^2\nonumber\\
			=&\left((1-\frac{\alpha(n-2)}{n(n-1)})^2 +\frac{\alpha^2n(3n-4)}{n^2(n-1)^2}-{\frac{2\alpha n}{n(n-1)}} \right)\sum_{i=1}^nX_{i,i}^2\nonumber\\
			=&\left(1-\frac{2\alpha(n-2)}{n(n-1)}+\frac{\alpha^2(n-2)^2}{n^2(n-1)^2} +\frac{\alpha^2n(3n-4)}{n^2(n-1)^2}-{\frac{2\alpha n}{n(n-1)}} \right)\sum_{i=1}^nX_{i,i}^2\nonumber\\
			=&\left(1-{\frac{2\alpha(n-2+n)}{n(n-1)}}+\frac{\alpha^2(n^2-4n+4+3n^2-4n)}{n^2(n-1)^2}\right)\sum_{i=1}^nX_{i,i}^2\nonumber\\
			=&\left(1-{\frac{4\alpha(n-1)}{n(n-1)}}+\frac{4\alpha^2(n-1)^2}{n^2(n-1)^2}\right)\sum_{i=1}^nX_{i,i}^2\nonumber\\
			=&\left(1-{\frac{2\alpha}{n}}\right)^2\sum_{i=1}^nX_{i,i}^2.\label{eq:thmyf:8}
		\end{align}
		
		Combining $(\ref{eq:thmyf:7})$ and $(\ref{eq:thmyf:8})$ gives
		\begin{align}
			\|X-\alpha\bar{X}\|_F\leq\sqrt{\sum_{i\neq j}(1-{\frac{2\alpha}{n(n-1)}})^2X_{i,j}^2+ \left(1-{\frac{2\alpha}{n}}\right)^2\sum_{i=1}^nX_{i,i}^2}.\label{eq:eqtr2}
		\end{align}
		Note that $\bar{\alpha}:=n-1$ satisfies $\bar{\alpha}\ge\frac{2n(n-1)}{3n-4}$ when $n\ge4$, and the coefficients in (\ref{eq:eqtr2}) satisfy
		\begin{align*}
			1-{\frac{2\bar{\alpha}}{n(n-1)}}=-(1-{\frac{2\bar{\alpha}}{n}})=\frac{n-2}{n}.
		\end{align*}
		Since $\|X\|_F=1$, by substituting $\bar{\alpha}$ into $(\ref{eq:eqtr2})$, we have 
		\begin{align*}
			\|X-\bar{\alpha}\bar{X}\|_F\leq&\sqrt{\left(\frac{n-2}{n}\right)^2\sum_{i\neq j}X_{i,j}^2+\left(\frac{n-2}{n}\right)^2\sum_{i=1}^nX_{i,i}^2}\\
			=&\frac{n-2}{n}\|X\|_F^2\\
			=&\frac{n-2}{n}.
		\end{align*}
		Because $\bar{X}\in{\cal S}^n_+$ and $\bar{\alpha}\ge0$, by letting $U=\frac{n-2}{n}$, we have 
		\begin{align*}
			\|X-{\rm P}_{{\cal S}^n_+}(X)\|_F\leq \|X-\bar{\alpha}\bar{X}\|_F\leq U=\frac{n-2}{n}
		\end{align*}
		and hence,
		\begin{align}
			\overline{\rm dist}_F({\cal DD}_n^*,{\cal S}^n_+)=\sup_{X\in{\cal DD}_n^*,\|X\|_F=1}\|X-{\rm P}_{{\cal S}^n_+}(X)\|_F\leq U=\frac{n-2}{n}.\label{eq:high2}
		\end{align}
		
		$(\ref{eq:low})$, $(\ref{eq:high1})$ and $(\ref{eq:high2})$ imply that $(\ref{eq:ineq})$ holds, which proves this theorem.

	\end{proof}
	
\end{appendix}

\begin{thebibliography}{10}
	
	\bibitem{ahmadi2017optimization}
	{\sc Ahmadi, A.A., Dash, S., Hall, G.}: {\em Optimization over structured subsets of positive semidefinite matrices via column generation}. Discrete Optimization, 24, pp.~129--151 (2017).
	
	\bibitem{ahmadi2017dsos}
	{\sc Ahmadi, A.A., Majumdar, A.}: {\em {DSOS} and {SDSOS} optimization: more tractable alternatives to sum of squares and semidefinite optimization}. SIAM Journal on Applied Algebra and Geometry, 3, pp.~193--230 (2019).
	
	\bibitem{bauer1958}
	{\sc Bauer, H.}: {\em Minimalstellen von Funktionen und Extremalpunkte}. Archiv der Mathematik, 9, 389--393 (1958). https://doi.org/10.1007/BF01898615
	
	\bibitem{bertsimas2020}
	{\sc Bertsimas, D., Cory-Wright, R.}: {\em On polyhedral and second-order cone decompositions of semidefinite optimization problems}. Operations Research Letters, 48(1), pp.~78--85 (2020).
	
	\bibitem{blekherman2020}
	{\sc Blekherman, G., Dey, S.S., Molinaro, M., Sun, S.}: {\em Sparse PSD approximation of the PSD cone}. Mathematical Programming, 191, pp.~ 981-1004 (2022).
	
	\bibitem{blekherman2021}
	{\sc Blekherman, G., Dey, S.S., Shu, K.,  Sun, S.}: {\em Hyperbolic Relaxation of $ k $-Locally Positive Semidefinite Matrices}. arXiv preprint arXiv:2012.04031 (2021).
	
	\bibitem{boman2005factor}
	{\sc Boman, E.G., Chen, D., Parekh, O., Toledo, S.}: {\em On factor width and symmetric {H}-matrices}. Linear Algebra and Its Applications, 405, pp.~239--248 (2005).
	
	{
		\bibitem{doi:10.1287/moor.2020.1077}
		{\sc Fawzi, H.}: {\em On Polyhedral Approximations of the Positive Semidefinite Cone}. Mathematics of Operations Research, 46(4), pp.~1479-1486 (2021).
	}
	
	\bibitem{freund2006}
	{\sc Freund, R. M.}: {\em On the behavior of the homogeneous self-dual model for conic convex optimization}. Mathematical programming, 106, pp.~527-545 (2006).
	
	\bibitem{Goemans:1995:IAA:227683.227684}
	{\sc Goemans, M.X., Williamson, D.P.}: {\em Improved approximation algorithms for maximum cut and satisfiability problems using semidefinite programming}. Journal of the ACM, 42, pp.~1115--1145 (1995).
	
	\bibitem{horn1990matrix}
	{\sc R.~A. Horn and C.~R. Johnson}: {\em Matrix Analysis}, Cambridge university
	press (1990).
	
	{
		\bibitem{kozhasov}
		{\sc Kozhasov, K.}: {\em On eigenvalues of symmetric matrices with PSD principal submatrices}. arXiv preprint arXiv:2103.15811 (2021).
	}
	
	\bibitem{Lasserre2001}
	{\sc Lasserre, J.B.}: {\em An explicit exact {SDP} relaxation for nonlinear 0-1 programs}, in Integer Programming and Combinatorial Optimization, K.~Aardal and B.~Gerards, eds., Berlin, Heidelberg, 2001, Springer Berlin Heidelberg, pp.~293--303.
	
	\bibitem{mosek}
	{\sc MOSEK ApS}: {\em The MOSEK optimization toolbox for MATLAB manual. Version 8.1.}, \newblock available at \verb http://docs.mosek.com/8.1/toolbox/index.html, 2017, [Accessed: 2021-3-12]
	
	\bibitem{permenter2014partial}
	{\sc Permenter, F., Parrilo, P.}: {\em Partial facial reduction: simplified, equivalent {SDP}s via approximations of the {PSD} cone}. Mathematical Programming, 171, pp.~1--54 (2018).
	
	\bibitem{schrijver2003}
	{\sc Schrijver, A.}: {\em Combinatorial optimization: polyhedra and efficiency}.vol.~24, Springer Science \& Business Media (2003).
	
	\bibitem{todd2001}
	{\sc Todd, M.J.}: {\em Semidefinite optimization}, Acta Numerica, 10, pp.~515--560 (2001).
	
	\bibitem{wang2021}
	{\sc Wang, Y., Tanaka, A., Yoshise, A.}: {\em Polyhedral approximations of the semidefinite cone and their application}, Comput Optim Appl, 78, pp.~893--913 (2021). https://doi.org/10.1007/s10589-020-00255-2
	
	\bibitem{wolkowicz2012handbook}
	{\sc Wolkowicz, H., Saigal, R., and Vandenberghe, L.}: {\em Handbook of Semidefinite Programming: Theory, Algorithms, and Applications}, vol.~27, Springer Science \& Business Media (2012).
\end{thebibliography}
\end{document}